\newcommand{\N}{\mathbb{N}}
\newcommand{\R}{\mathbb{R}}
\newcommand{\C}{\mathbb{C}}
\newtheorem{remark}[theorem]{Remark}
\newtheorem{assumption}[theorem]{Assumption}
\newcommand{\overbar}[1]{\mkern 1.0mu\overline{\mkern-1.0mu#1\mkern-1.0mu}\mkern 1.0mu}
\title{Series reversion for electrical impedance tomography with modeling errors}
\author{H. Garde\footnotemark[1], N. Hyv\"onen\footnotemark[2], and T. Kuutela\footnotemark[2]
}
\begin{document}
\maketitle

\renewcommand{\thefootnote}{\fnsymbol{footnote}}

\footnotetext[1]{Aarhus University, Department of Mathematics, Ny Munkegade 118, 8000 Aarhus C, Denmark (garde@math.au.dk). HG is also affiliated with the Aarhus University DIGIT centre.
}

\footnotetext[2]{Aalto University, Department of Mathematics and Systems Analysis, P.O. Box 11100, FI-00076 Aalto, Finland (nuutti.hyvonen@aalto.fi, topi.kuutela@aalto.fi). The work of NH and TK was supported by the Academy of Finland (decision 336789) and Jane and Aatos Erkko Foundation via the project Electrical impedance tomography --- a novel method for improved diagnostics of stroke.}

\begin{abstract}
  This work extends the results of [Garde and Hyv\"onen,~Math.~Comp.~91:1925--1953] on series reversion for Calder\'on's problem to the case of realistic electrode measurements, with both the internal admittivity of the investigated body and the contact admittivity at the electrode-object interfaces treated as unknowns. The forward operator, sending the internal and contact admittivities to the linear electrode current-to-potential map, is first proven to be analytic. A reversion of the corresponding Taylor series yields a family of numerical methods of different orders for solving the inverse problem of electrical impedance tomography, with the possibility to employ different parametrizations for the unknown internal and boundary admittivities. The functionality and convergence of the methods is established only if the employed finite-dimensional parametrization of the unknowns allows the Fr\'echet derivative of the forward map to be injective, but we also heuristically extend the methods to more general settings by resorting to regularization motivated by Bayesian inversion. The performance of this regularized approach is tested via three-dimensional numerical examples based on simulated data.
  The effect of modeling errors related to electrode shapes and contact admittances is a focal point of the numerical studies.
\end{abstract}

\renewcommand{\thefootnote}{\arabic{footnote}}

\begin{keywords}
electrical impedance tomography, smoothened complete electrode model, series reversion, Bayesian inversion, mismodeling, Levenberg--Marquardt algorithm
\end{keywords}

\begin{AMS}
35R30, 35J25, 41A58, 47H14, 65N21
\end{AMS}

\pagestyle{myheadings}
\thispagestyle{plain}
\markboth{H. GARDE, N. HYV\"ONEN, AND T. KUUTELA}{SERIES REVERSION FOR EIT}

\section{Introduction}
\label{sec:introduction}
        {\em Electrical impedance tomography} (EIT) is an imaging modality for deducing information on the admittivity distribution inside a physical body based on current and potential measurements on the object boundary; for general information on the practice and theory of EIT, we refer to the review articles \cite{Borcea02,Cheney99,Uhlmann09} and the references therein. Real-world EIT measurements are performed with a finite number of contact electrodes, and in addition to the internal admittivity, there are typically also other unknowns in the measurement setup, such as the contacts at the electrode-object interfaces and the precise positions of the electrodes. If one models the measurements with the so-called {\em smoothened complete electrode model} (SCEM)~\cite{Hyvonen17b}, a variant of the well-established standard {\em complete electrode model} (CEM) \cite{Cheng89,Somersalo92}, it is possible to include both the strength and the positions of the electrode contacts as unknowns in the inverse problem of EIT \cite{darde2021electrodeless}.

        The aim of this work is to combine the SCEM with the series reversion ideas of \cite{garde2021series} to introduce a family of numerical one-step methods with increasing theoretical accuracy for simultaneously reconstructing the internal and electrode admittivities. To be more precise, \cite{garde2021series} applied reversion to the Taylor series of a forward map, sending a perturbation in a known interior admittivity to a projected version of the current-to-voltage boundary map, in order to introduce methods of different asymptotic accuracies for reconstructing (only) the interior admittivity perturbation. Both the idealized {\em continuum model} (CM) of EIT and the SCEM were considered, but the contact admittivities were assumed to be known for the latter,~i.e.,~they were not treated as additional unknowns in the inversion. Here, we generalize the ideas of \cite{garde2021series} to include the contact admittivities as variables in the Taylor expansion and as unknowns in the subsequent series reversion. See~\cite{Arridge12} for closely related ideas.

        Unlike in~\cite{garde2021series}, we perform our analysis for general parametrizations of the internal and contact admittivities. As an example of such a parametrization, one may write the domain conductivity as $\sigma = {\rm e}^\kappa$ and treat the log-conductivity $\kappa$ as the unknown in the series reversion, which is the choice in our numerical studies. This leads to more complicated inversion formulas as the linear dependence of the involved sesquilinear forms on the unknowns is lost, but it may also have a positive effect on the reconstruction accuracy in some settings \cite{Hyvonen18}. As in \cite{garde2021series}, a fundamental requirement for the theoretical convergence of the introduced family of methods is that the Fr\'echet derivative of the forward map is injective for the chosen  parametrizations of the internal and contact admittivities; for the considered measurements with a finite number of electrodes, the injectivity is actually a sufficient condition for the convergence since it guarantees invertibility of the Fr\'echet derivative on its range that is necessarily finite-dimensional (cf.~\cite{garde2021series}). In fact, the only potentially unstable step in any of the introduced numerical methods is  the requirement to operate with the inverse of the Fr\'echet derivative. Furthermore, the asymptotic computational complexity of any of the numerical schemes is the same as that of a straightforward linearization~\cite{garde2021series}.

        Our numerical examples concentrate on testing the series reversion approach in a realistic three-dimensional setting where the Fr\'echet derivative of the forward map is not forced to be (stably) invertible via employing sparse enough discretizations for the admittivities (cf.~\cite{Alberti2019,Alberti2020,Harrach_2019}). To perform the required inversion of the Fr\'echet derivative, we resort to certain Tikhonov-type regularization motivated by Bayesian inversion; since the assumption guaranteeing the functionality of the introduced family of methods is not met, there is no reason to expect that the theoretical convergence rates carry over to this regularized framework as such. According to our tests, the introduced second and third order methods demonstrate potential to outperform a single regularized linearization. However, at least with the chosen regularization that is not specifically designed for our recursively defined family of numerical methods, a Levenberg--Marquardt algorithm based on a couple of sequential linearizations leads on average to more accurate reconstructions than the novel methods. Recall, however, that the computational cost of several sequential linearizations is asymptotically higher than that of a single series reversion of any order, and according to our tests, the performance of sequential series reversions is approximately as good as that of a same number of linearizations. In addition to these comparisons, we computationally demonstrate that all considered numerical schemes are capable of coping to a certain extent with mismodeling of the electrode contacts if their strengths and locations are included as unknowns in the inversion in the spirit of the two-dimensional numerical tests of~\cite{darde2021electrodeless}.
        Other previously introduced methods for handling unknown strengths, shapes or positions of electrode contacts in EIT include,~e.g.,~\cite{Boverman17,Boverman09,Darde12,Demidenko11b,Demidenko11a,Hyvonen17,Nissinen09,Nissinen11,Soleimani06,Vilhunen02}  

        This text is organized as follows. Section~\ref{sec:Taylor} recalls the SCEM and introduces the Taylor series for its (complete) forward operator, paying particular attention to the complications caused by the SCEM allowing the contact admittivity to vanish on parts of the electrodes. The series reversion for general parametrizations of the internal and boundary admittivities is presented in Section~\ref{sec:seriesreversion}. Sections~\ref{sec:implementation} and \ref{sec:numerics} consider the implementation details and describe our three-dimensional numerical experiments based on simulated data, respectively. Finally, the concluding remarks are listed in Section~\ref{sec:conclusions}.

        \section{SCEM and the Taylor series for its forward map}
\label{sec:Taylor}

This section first recalls the SCEM and then introduces a Taylor series representation for the associated forward operator. For more information on the SCEM and how it can be employed in accounting for uncertainty in the electrode positions when solving the inverse problem of EIT, we refer to \cite{Hyvonen17b} and \cite{darde2021electrodeless}, respectively.

\subsection{Forward model and its unique solvability}
The examined physical body is modeled by a bounded Lipschitz domain $\Omega \subset \R^n$, $n=2$ or $3$, and its electric properties are characterized by
an isotropic admittivity $\sigma: \Omega \to \C$ that belongs to
\begin{equation}
  \label{eq:sigma}
  \mathcal{S} :=  \big\{ \sigma \in L^\infty(\Omega) \  \big| \
          {\rm ess} \inf ( {\rm Re}(\sigma))  > 0 \big\}.
\end{equation}
The measurements are performed by  $M \in \N \setminus \{ 1 \}$ electrodes $\{ E_m\}_{m=1}^M$ that are identified with the nonempty connected relatively open subsets of $\partial \Omega$ that they cover. We assume $E_m \cap E_l = \emptyset$ for all $m \not= l$ and denote $E:= \cup E_m$. To begin with, the electrode contacts are modeled by a surface admittivity $\zeta: \partial \Omega \to \C$ that is assumed to satisfy
\begin{equation}
  \label{eq:zeta}
\mathcal{Z} := \big\{ \zeta \in L^\infty(E) \  \big| \  {\rm Re}( \zeta) \geq 0 \   {\rm and} \  {\rm Re}( \zeta |_{E_m}) \not \equiv 0 \  {\rm for} \ {\rm all} \ m= 1, \dots, M \big\},
\end{equation}
where the conditions on $\zeta$ hold almost everywhere on $\partial \Omega$ or $E_m$. The sets $\mathcal{Z}$ and $L^\infty(E)$ are interpreted as subsets of $L^\infty(\partial \Omega)$ via zero continuation.

A single EIT measurement corresponds to driving the net currents $I_m\in\C $, $m=1, \dots, M$, through the corresponding electrodes and measuring the resulting constant electrode potentials $U_m \in \C$, $m=1, \dots, M$. Obviously, any reasonable current pattern $I = [I_1,\dots,I_M]^{\rm T}$ belongs to the mean-free subspace
\[
\C^M_\diamond \, := \, \Big\{J \in\C^M\,\Big|\, \sum_{m=1}^M J_m = 0\Big\}.
\]
In the following analysis, the potential vector $U = [U_1,\dots,U_M]^{\rm T} \in \C^M$ is often identified with the piecewise constant function
\begin{equation}
\label{eq:piecewise}
U \, = \, \sum_{m=1}^M U_m \chi_m,
\end{equation}
where $\chi_m$ is the characteristic function of the $m$th electrode $E_m$.

According to the SCEM~\cite{Hyvonen17b}, the electromagnetic potential $u$ inside $\Omega$ and the electrode potentials $U \in L^\infty(E) \subset L^\infty(\partial \Omega)$ weakly satisfy
\begin{equation}
\label{eq:cemeqs}
\begin{array}{ll}
\displaystyle{\nabla \cdot(\sigma\nabla u) = 0 \qquad}  &{\rm in}\;\; \Omega, \\[6pt]
{\displaystyle {\nu\cdot\sigma\nabla u} = \zeta (U - u) } \qquad &{\rm on}\;\; \partial \Omega, \\[2pt]
{\displaystyle \int_{E_m}\nu\cdot\sigma\nabla u\,{\rm d}S} = I_m, \qquad & m=1,\ldots,M, \\[4pt]
\end{array}
\end{equation}
where $\nu$ is the exterior unit normal of $\partial\Omega$. Set ${\bf 1} = [1 \dots 1]^{\rm T}\in \C^M$. The variational formulation of \eqref{eq:cemeqs} is to find the unique $(u,U)$ in the space of equivalence classes
\begin{align*}
  \mathcal{H} &:= \big\{ \{ (v + c, V + c {\bf 1}) \, | \, c \in \C \} \, \big| \, (v, V) \in H^1(\Omega)\oplus \C^M \big\}
\end{align*}
such that
\begin{equation}
\label{eq:weak}
B_{\sigma,\zeta}\big((u,U),(v,V)\big)  \,=  \, I\cdot \overbar{V} \qquad {\rm for} \ {\rm all} \ (v,V) \in  \mathcal{H},
\end{equation}
with the sesquilinear form $B_{\sigma,\zeta}: \mathcal{H} \times \mathcal{H} \to \C$ defined by
\begin{equation}
\label{eq:sesqui}
B_{\sigma,\zeta}\big((w,W),(v,V)\big) = \int_\Omega \sigma\nabla w\cdot \nabla \overbar{v} \,{\rm d}x + \int_{\partial \Omega} \zeta (W-w)(\overbar{V}-\overbar{v})\,{\rm d}S.
\end{equation}
The space $\mathcal{H}$ is equipped with the standard quotient norm.
\begin{equation}
\label{eq:norm}
\|(v,V)\|_{\mathcal{H}} := \inf_{c\in\C}\Big( \|v-c\|_{H^1(\Omega)}^2 + | V - c {\bf 1}|^2 \Big)^{1/2},
\end{equation}
where $| \cdot  |$ denotes the Euclidean norm.

According to the material in \cite[Section~4]{garde2021series}, for any $(\sigma, \zeta) \in L^\infty(\Omega) \times L^\infty(E)$ it holds
\begin{equation}
 \label{eq:cont}
 \big| B_{\sigma,\zeta}\big((w,W), (v,V) \big) \big| \, \leq \, C (\|\sigma\|_{L^\infty(\Omega)} +  \|\zeta \|_{L^\infty(\partial \Omega)}) \|(w,W) \|_{\mathcal{H}}  \|(v,V) \|_{\mathcal{H}},
\end{equation}
where the constant $C > 0$ is independent of $(w,W), (v,V) \in \mathcal{H}$. This settles the continuity of the considered sesquilinear form. To tackle its coercivity in a manner that allows perturbing each contact admittivity in $\mathcal{Z}$ to any direction, we define $\mathcal{Z}'$ to be the largest subset of  $L^\infty(E)$ for which the following condition holds for any $(\sigma,\zeta) \in \mathcal{S} \times \mathcal{Z}'$:
\begin{equation}
  \label{eq:coer}
{\rm Re} \Big( B_{\sigma,\zeta} \big((v,V), (v,V) \big) \Big) \, \geq \, c \|(v,V) \|_{\mathcal{H}}^2,
\end{equation}
with $c=c(\sigma,\zeta)>0$ independent of $(v,V) \in \mathcal{H}$. According to \cite[Lemma~2.4]{darde2021electrodeless}, the set $\mathcal{Z}'$ is open in $L^\infty(E)$ and contains $\mathcal{Z}$. Moreover, it follows directly from \cite[Lemma~2.4]{darde2021electrodeless} that any $(\sigma,\zeta) \in \mathcal{S} \times \mathcal{Z}'$ has a nonempty open neighborhood $\mathcal{N}_{\sigma,\zeta}$ in $L^\infty(\Omega)\times L^\infty(E)$ such that \eqref{eq:coer} holds for all $(\varsigma, \xi) \in \mathcal{N}_{\sigma,\zeta}$ with the same constant $c > 0$.

If $(\sigma, \zeta) \in \mathcal{S} \times \mathcal{Z}'$, then \eqref{eq:weak} has a unique solution that satisfies
\begin{equation}
  \label{eq:existence}
\| (u,U) \|_{\mathcal{H}} \leq \frac{|I|}{c(\sigma,\zeta)},
\end{equation}
as can easily be deduced by resorting to \eqref{eq:cont}, \eqref{eq:coer} and the Lax--Milgram lemma. The constant $c(\sigma,\zeta)$ in \eqref{eq:existence} is the one appearing in \eqref{eq:coer}. In particular, \eqref{eq:existence} allows us to define a nonlinear ``parameter to forward solution operator'' map $N: \mathcal{S} \times \mathcal{Z}' \to \mathscr{L}(\C_\diamond^M, \mathcal{H})$ via
$$
N(\sigma, \zeta)I =  (u,U),
$$
where $(u,U)$ is the solution to \eqref{eq:weak} for $(\sigma, \zeta) \in \mathcal{S} \times \mathcal{Z}'$. Moreover, denoting by $T \in \mathscr{L}(\mathcal{H}, \C_\diamond^M)$ the `trace map' that picks the zero-mean representative for the second component of an element in $\mathcal{H}$, we define the forward map of the SCEM,~i.e.,~$\Lambda: \mathcal{S} \times \mathcal{Z}' \to \mathscr{L}(\C_\diamond^M)$, through
$$
\Lambda(\sigma, \zeta)I = T N (\sigma, \zeta)I
$$
for $(\sigma, \zeta) \in \mathcal{S} \times \mathcal{Z}'$ and $I \in \C_\diamond^M$.

\subsection{Forward map and its Taylor series}
In this section, we extend the Taylor series presented in~\cite{garde2021series} to also include the contact admittivity as a variable. This extension is based essentially on the same arguments as the ones utilized in~\cite{garde2021series}. In addition to the standard parametrization for the internal and boundary admittivities,~i.e.,~treating $\sigma$ and $\zeta$ directly as the variables, we also consider more general (analytic) parametrizations. This is motivated by the fact that real world domain conductivities range from the order of \SI{e-20}{\siemens\per\meter} (plastics) to \SI{e8}{\siemens\per\meter} (metals), which means that it may be numerically advantageous to adopt,~e.g.,~the log-admittivity $\kappa = \log \sigma$ as the to-be-reconstructed variable. However, on the downside, such an approach leads to somewhat more complicated structures for the associated Taylor series and reversion formulas due to the loss of linear dependence of the sesquilinear form \eqref{eq:sesqui} on the parameters of interest.

\subsubsection{Standard parametrization}
A general interior-boundary admittivity pair is denoted by $\tau := (\sigma, \zeta) \in \mathcal{S} \times \mathcal{Z'} =: \mathcal{K}_+$. Analogously, the space of total admittivity perturbations is denoted by $\mathcal{K} := L^{\infty}(\Omega) \times L^{\infty}(E)$ and equipped with a natural norm,~i.e.,~$\|(\sigma,\zeta)\|_\mathcal{K} := \|\sigma\|_{L^\infty(\Omega)} + \|\zeta\|_{L^\infty(E)}$. We mildly abuse the notation by writing $B_\tau$ instead of $B_{\sigma, \zeta}$ for the sesquilinear form introduced in \eqref{eq:sesqui}.

	For a fixed $\tau \in \mathcal{K_+}$, we define a linear operator $P_\tau \in \mathscr{L}(\mathcal{K}, \mathscr{L}(\mathcal{H}))$ via $P_\tau(\eta)(y, Y) = (w, W) $, where $(w, W) \in \mathcal{H}$ is the unique solution of
	\begin{equation}
		\label{eq:Pdef}
		B_{\tau} \big((w, W), (v, V)\big) = - B_{\eta}\big((y, Y), (v, V)\big) \qquad \text{for all } (v, V) \in \mathcal{H}.
	\end{equation}
        The unique solvability of \eqref{eq:Pdef} can be deduced by combining \eqref{eq:cont} and \eqref{eq:coer} with the Lax--Milgram lemma, which also directly provides the estimates
        \begin{equation}
		\label{eq:Pbound}
		\| P_\tau(\eta) \|_{\mathscr{L}(\mathcal{H})} \leq \frac{C \| \eta \|_\mathcal{K}}{c(\tau)}, \qquad \| P_\tau \|_{\mathscr{L}(\mathcal{K},\mathscr{L}(\mathcal{H}))} \leq \frac{C}{c(\tau)},
	\end{equation}
        where $C$ and $c(\tau)$ are the constants appearing in \eqref{eq:cont} and \eqref{eq:coer}, respectively. It is well known that $P_\tau(\eta)(u,U) = P_\tau(\eta) N(\tau)I \in \mathcal{H}$, with $(u,U)$ being the solution to \eqref{eq:weak} for $I \in \C_\diamond^M$ and $\tau = (\sigma, \zeta)$, is the Fr\'echet derivative of $N(\tau)I = (u,U)$ with respect to $\tau$ in the direction $\eta$; see,~e.g.,~\cite{darde2021electrodeless,Kaipio00}.

	In order to deduce higher order derivatives for $N(\tau)$, we first write down a differentiability lemma for $P_\tau$, playing the same role as \cite[Lemmas 3.1 \& 5.1]{garde2021series} in the case where only the domain admittivity is considered as a variable.

	\begin{lemma}
		\label{lem:DP}
		The map $P_\tau \in \mathscr{L}(\mathcal{K}, \mathscr{L}(\mathcal{H}))$ is infinitely times continuously Fr\'echet differentiable with respect to $\tau \in \mathcal{K}_+$. Its first derivative at $\tau$ in the direction $\eta$, i.e.~$D_\tau P_\tau(\, \cdot \, ; \eta) \in \mathscr{L}(\mathcal{K}, \mathscr{L}(\mathcal{H}))$, allows the representation
		\begin{equation}
                  \label{eq:product}
		  D_{\tau} P_\tau (\, \cdot \, ; \eta) = P_\tau(\eta) P_\tau(\, \cdot \,).
		\end{equation}
	\end{lemma}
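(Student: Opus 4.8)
The plan is to prove Lemma~\ref{lem:DP} by first establishing the key algebraic identity \eqref{eq:product} at the level of a finite difference, and then bootstrapping it to obtain infinite differentiability. First I would fix $\tau \in \mathcal{K}_+$ and $\eta, \xi \in \mathcal{K}$ with $\|\xi\|_\mathcal{K}$ small enough that $\tau + \xi \in \mathcal{K}_+$ (possible since $\mathcal{K}_+ = \mathcal{S} \times \mathcal{Z}'$ is open, using that $\mathcal{S}$ is open in $L^\infty(\Omega)$ and $\mathcal{Z}'$ is open in $L^\infty(E)$ by the cited \cite[Lemma~2.4]{darde2021electrodeless}). The central observation is the resolvent-type identity: for any $(y,Y) \in \mathcal{H}$, writing $(w,W) = P_{\tau+\xi}(\eta)(y,Y)$ and $(\tilde w, \tilde W) = P_\tau(\eta)(y,Y)$, subtracting the defining equations \eqref{eq:Pdef} for $\tau+\xi$ and for $\tau$ gives, for all $(v,V) \in \mathcal{H}$,
\begin{equation*}
B_\tau\big((w - \tilde w, W - \tilde W), (v,V)\big) = -B_\xi\big((w, W), (v,V)\big),
\end{equation*}
where I have used the linearity of $B_{(\cdot)}$ in its subscript to combine $B_{\tau+\xi}((w,W),\cdot) = B_\tau((w,W),\cdot) + B_\xi((w,W),\cdot)$. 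By the definition \eqref{eq:Pdef} of $P_\tau$, this says exactly $P_{\tau+\xi}(\eta)(y,Y) - P_\tau(\eta)(y,Y) = P_\tau(\xi)\big(P_{\tau+\xi}(\eta)(y,Y)\big)$, i.e.
\begin{equation*}
P_{\tau+\xi}(\eta) - P_\tau(\eta) = P_\tau(\xi)\, P_{\tau+\xi}(\eta)
\end{equation*}
as operators in $\mathscr{L}(\mathcal{H})$.

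Next I would use this identity twice. Substituting it into its own right-hand side yields $P_{\tau+\xi}(\eta) - P_\tau(\eta) = P_\tau(\xi) P_\tau(\eta) + P_\tau(\xi)\big(P_{\tau+\xi}(\eta) - P_\tau(\eta)\big) = P_\tau(\xi) P_\tau(\eta) + P_\tau(\xi) P_\tau(\xi) P_{\tau+\xi}(\eta)$. The last term is $O(\|\xi\|_\mathcal{K}^2)$: indeed, by \eqref{eq:Pbound} we have $\|P_\tau(\xi)\|_{\mathscr{L}(\mathcal{H})} \le C\|\xi\|_\mathcal{K}/c(\tau)$, and by the uniform coercivity on a neighborhood $\mathcal{N}_{\sigma,\zeta}$ (the remark following \eqref{eq:coer}) the operator norm $\|P_{\tau+\xi}(\eta)\|_{\mathscr{L}(\mathcal{H})}$ stays bounded by $C\|\eta\|_\mathcal{K}/c$ for $\xi$ small, with a $\xi$-independent constant $c$. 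Hence
\begin{equation*}
\big\| P_{\tau+\xi}(\eta) - P_\tau(\eta) - P_\tau(\xi)P_\tau(\eta) \big\|_{\mathscr{L}(\mathcal{H})} \le \frac{C^3 \|\eta\|_\mathcal{K}}{c\, c(\tau)^2} \|\xi\|_\mathcal{K}^2,
\end{equation*}
and since $\xi \mapsto P_\tau(\xi)P_\tau(\eta)$ is bounded linear in $\xi$, this is precisely the statement that $P_\tau$ is Fréchet differentiable in $\tau$ in the direction $\eta$ with derivative $D_\tau P_\tau(\,\cdot\,;\eta) = P_\tau(\eta)P_\tau(\,\cdot\,)$, establishing \eqref{eq:product}. (One also checks joint measurability/linearity so that this is a genuine element of $\mathscr{L}(\mathcal{K},\mathscr{L}(\mathcal{H}))$, which is immediate from bilinearity of $(\xi,\eta) \mapsto P_\tau(\xi)P_\tau(\eta)$ and \eqref{eq:Pbound}.)

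For the infinite differentiability, I would argue inductively. The formula \eqref{eq:product} expresses $D_\tau P_\tau(\,\cdot\,;\eta)$ as a composition of the maps $\tau \mapsto P_\tau(\eta)$ and $\tau \mapsto P_\tau(\,\cdot\,)$, i.e. of bilinear-type compositions of $P_\tau$ with itself; since $\tau \mapsto P_\tau \in \mathscr{L}(\mathcal{K},\mathscr{L}(\mathcal{H}))$ is (by the base case) $C^1$, and composition/multiplication of operators is a bounded bilinear — hence smooth — operation, the chain rule shows $\tau \mapsto D_\tau P_\tau$ is again $C^1$, so $P_\tau$ is $C^2$; each further differentiation of a product $P_\tau(\cdot) \cdots P_\tau(\cdot)$ by the Leibniz rule reproduces a finite sum of such products and raises the degree of smoothness by one, giving $P_\tau \in C^\infty$. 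Continuity of the first derivative in $\tau$ also follows directly from the identity $D_\tau P_\tau(\,\cdot\,;\eta) - D_{\tau'} P_{\tau'}(\,\cdot\,;\eta) = P_\tau(\eta)P_\tau(\,\cdot\,) - P_{\tau'}(\eta)P_{\tau'}(\,\cdot\,)$ together with the continuity of $\tau \mapsto P_\tau$ (continuity of $P_\tau$ in $\tau$ is itself a consequence of the finite-difference identity above and the bounds \eqref{eq:Pbound}). The main obstacle, such as it is, is bookkeeping rather than depth: one must be careful that all the operator-norm bounds are uniform on a fixed neighborhood of $\tau$ — this is exactly what the remark after \eqref{eq:coer} about a common coercivity constant $c$ on $\mathcal{N}_{\sigma,\zeta}$ is for — and one must keep straight the distinction between differentiating in the slot $\eta \in \mathcal{K}$ (where $P_\tau$ is already linear) and in the base point $\tau \in \mathcal{K}_+$ (where the dependence is genuinely nonlinear); the linearity of $B_{(\cdot)}$ in its parameter subscript is what makes the finite-difference identity exact and is the one structural fact doing the real work.
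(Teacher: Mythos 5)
Your proof is correct and follows essentially the same route as the paper, which simply defers to [Lemmas 3.1 \& 5.1] of the cited reference \texttt{garde2021series}: the resolvent-type identity $P_{\tau+\xi}(\eta) - P_\tau(\eta) = P_\tau(\xi)P_{\tau+\xi}(\eta)$ obtained from the linearity of $B_{(\cdot)}$ in its subscript, iterated once and controlled via the uniform coercivity constant on a neighborhood $\mathcal{N}_\tau$, is exactly the argument there, and that uniform-neighborhood point is the one detail the paper's own proof explicitly flags. Your write-up merely fills in the details that the paper leaves to the citation.
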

	\begin{proof}
          The result follows through exactly the same line of reasoning as \cite[Lemmas 3.1 \& 5.1]{garde2021series}, bearing in mind that for any $\tau \in \mathcal{K}_+$ there exists a nonempty open neighborhood $\mathcal{N}_{\tau} \in \mathcal{K}_+$ such that the coercivity of the sesquilinear form $B_\tau$, characterized by \eqref{eq:coer}, holds with the same constant everywhere in $\mathcal{N}_{\tau}$; see \cite[Lemma~2.4]{darde2021electrodeless}.
        \end{proof}

         Observe that \eqref{eq:product} immediately provides means to deduce formulas for higher order derivatives for $N$ and $P_{\tau}$ via the product rule. However, as in \cite{garde2021series,Garde2020} for the mere domain admittivity, we can actually do better. To this end, let $p_k$ be the collection of index permutations of length $k$, that is,
	\begin{equation*}
		p_k = \{\alpha_1, \ldots, \alpha_k \; | \; \alpha_i \in \{1, \ldots, k\}, \; \alpha_i \neq \alpha_j \; \text{if} \; i \neq j\}.
	\end{equation*}

	\begin{theorem}
		\label{thm:lineartaylor}
		The mapping $N$ is infinitely times continuously Fr\'echet differentiable. Its derivatives at $\tau \in \mathcal{K_+}$ are given by
		\begin{equation}
			\label{eq:DkN}
			D^k N(\tau; \eta_1, \ldots, \eta_k) = \left( \sum_{\alpha \in p_k} P_\tau(\eta_{\alpha_1}) \cdots P_\tau(\eta_{\alpha_k}) \right) N(\tau), \qquad k \in \N,
		\end{equation}
		with $\eta_1, \ldots, \eta_k \in \mathcal{K}$. In particular, $N$ is analytic with the Taylor series
		\begin{equation}
			\label{eq:Ntaylor}
			N(\tau + \eta) = \sum_{k = 0}^\infty \frac{1}{k!} D^k N(\tau; \eta, \ldots, \eta) = \sum_{k = 0}^\infty P_\tau(\eta)^k N(\tau),
		\end{equation}
		where $\tau \in \mathcal{K}_+$ and $\eta \in \mathcal{K}$ is small enough so that also $\tau + \eta \in \mathcal{K}_+$.
	\end{theorem}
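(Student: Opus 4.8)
The plan is to establish \eqref{eq:DkN} by induction on $k$ and then to recognize the resulting Taylor series as a Neumann-type geometric series. The base case $k=1$ is precisely the characterization of the Fréchet derivative $DN(\tau;\eta_1) = P_\tau(\eta_1)N(\tau)$ recalled in the text before Lemma~\ref{lem:DP}. For the inductive step, I would assume \eqref{eq:DkN} holds for some $k \in \N$ and differentiate the right-hand side with respect to $\tau$ in a new direction $\eta_{k+1}$. The expression $\bigl(\sum_{\alpha \in p_k} P_\tau(\eta_{\alpha_1})\cdots P_\tau(\eta_{\alpha_k})\bigr)N(\tau)$ is a finite sum of products of the maps $\tau \mapsto P_\tau(\eta_{\alpha_i})$ (each infinitely differentiable by Lemma~\ref{lem:DP}) composed with $\tau \mapsto N(\tau)$ (differentiable by the base case, hence inductively $C^\infty$), so the product rule applies and gives a finite sum. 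Using Lemma~\ref{lem:DP}, i.e.~\eqref{eq:product}, each time a factor $P_\tau(\eta_{\alpha_i})$ is differentiated it is replaced by $P_\tau(\eta_{k+1})P_\tau(\eta_{\alpha_i})$, and when $N(\tau)$ is differentiated it contributes $P_\tau(\eta_{k+1})N(\tau)$ at the far right.

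The combinatorial heart of the argument is then to check that collecting all these terms reproduces exactly $\sum_{\beta \in p_{k+1}} P_\tau(\eta_{\beta_1})\cdots P_\tau(\eta_{\beta_{k+1}}) N(\tau)$. Differentiating the factor in slot $j$ of a given permutation $\alpha \in p_k$ inserts $P_\tau(\eta_{k+1})$ immediately to the left of slot $j$; differentiating $N(\tau)$ appends $P_\tau(\eta_{k+1})$ at the right. These $k+1$ insertion positions, ranged over all $\alpha \in p_k$, are in bijection with the permutations in $p_{k+1}$: given $\beta \in p_{k+1}$, locate the position of the index $k+1$ and delete it to recover a unique $\alpha \in p_k$ together with a unique insertion slot. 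This is the same bookkeeping carried out in \cite{garde2021series,Garde2020}, so I would state it briefly and refer there for the detailed verification. This establishes \eqref{eq:DkN} and, in particular, that $N$ is infinitely times continuously Fréchet differentiable on $\mathcal{K}_+$, since each $D^k N(\tau;\cdot)$ is a finite sum of compositions of continuous operators depending continuously on $\tau$.

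For the analyticity and the Taylor series \eqref{eq:Ntaylor}, I would first evaluate \eqref{eq:DkN} on the diagonal $\eta_1 = \cdots = \eta_k = \eta$: since all $k!$ permutations in $p_k$ then yield the identical product $P_\tau(\eta)^k$, one gets $D^k N(\tau;\eta,\dots,\eta) = k!\,P_\tau(\eta)^k N(\tau)$, whence the coefficients $\tfrac{1}{k!}D^k N(\tau;\eta,\dots,\eta) = P_\tau(\eta)^k N(\tau)$. It remains to show the power series $\sum_{k=0}^\infty P_\tau(\eta)^k N(\tau)$ converges in $\mathscr{L}(\C_\diamond^M,\mathcal{H})$ and represents $N(\tau+\eta)$ for $\eta$ small. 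Convergence follows from the operator-norm bound $\|P_\tau(\eta)\|_{\mathscr{L}(\mathcal{H})} \le C\|\eta\|_{\mathcal{K}}/c(\tau)$ in \eqref{eq:Pbound}: for $\|\eta\|_{\mathcal{K}} < c(\tau)/C$ the series is dominated by a convergent geometric series, giving $\sum_{k} P_\tau(\eta)^k = (\Id - P_\tau(\eta))^{-1}$. To identify the limit with $N(\tau+\eta)$, I would plug $(u,U) := (\Id - P_\tau(\eta))^{-1}N(\tau)I$ into the weak formulation \eqref{eq:weak} with admittivity $\tau + \eta$: writing $(u,U) = N(\tau)I + P_\tau(\eta)(u,U)$, applying $B_{\tau+\eta} = B_\tau + B_\eta$, and using the defining identities \eqref{eq:weak} for $N(\tau)I$ and \eqref{eq:Pdef} for $P_\tau(\eta)$, the terms telescope to $B_{\tau+\eta}((u,U),(v,V)) = I\cdot\overbar V$ for all $(v,V)\in\mathcal{H}$; uniqueness of the solution to \eqref{eq:weak} then forces $(u,U) = N(\tau+\eta)I$. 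Finally, a convergent power series with values in a Banach space is analytic, and the neighborhood where $\tau + \eta \in \mathcal{K}_+$ (which is open) is where all of this is valid.

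The main obstacle I anticipate is purely organizational rather than analytic: getting the permutation bookkeeping in the inductive step exactly right, in particular confirming that the product rule applied to $\sum_{\alpha\in p_k}(\cdots)N(\tau)$ produces each element of $p_{k+1}$ with multiplicity exactly one and nothing else. Everything else — differentiability of the summands, the operator-norm geometric-series estimate, and the telescoping verification that the series solves \eqref{eq:weak} — is routine given \eqref{eq:cont}, \eqref{eq:coer}, \eqref{eq:Pbound}, and Lemma~\ref{lem:DP}, and closely parallels the corresponding arguments in \cite{garde2021series}.
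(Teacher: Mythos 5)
Your proposal is correct and follows essentially the same route as the paper, which simply defers to the inductive product-rule argument and Neumann-series identification of [garde2021series, Theorems 3.3 and 5.2]; you have merely written out the details (the permutation bookkeeping in the inductive step via Lemma~\ref{lem:DP}, the diagonal evaluation giving $k!\,P_\tau(\eta)^k$, the geometric-series bound from \eqref{eq:Pbound}, and the telescoping verification that $(\Id-P_\tau(\eta))^{-1}N(\tau)I$ solves \eqref{eq:weak} for $\tau+\eta$) that the paper leaves to the citation. No gaps.
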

	\begin{proof}
		The proof follows via exactly the same arguments as \cite[Theorem 3.3 \& Theorem 5.2]{garde2021series}, with $B_\eta$ taking the role of the employed continuous (and coercive for $\eta = \tau$) sesquilinear form as per the definition of our operator $P_\tau$.
	\end{proof}

        \begin{remark}
		\label{rem:Dtrace}
		By the linearity and boundedness of the trace operator $T$, the results of Theorem~\ref{thm:lineartaylor} immediately extend for the forward map $\Lambda = T N$ of the SCEM. In particular,
		\begin{equation}
                  \label{eq:Ltaylor}
			\Lambda(\tau + \eta) = \sum_{k = 0}^\infty \frac{1}{k!} D^k \Lambda(\tau; \eta, \ldots, \eta) = T\sum_{k = 0}^\infty P_\tau(\eta)^k N(\tau)
		\end{equation}
                for $\tau \in \mathcal{K}_+$ and small enough $\eta \in \mathcal{K}$.
	\end{remark}

        \begin{remark}
          The abstract requirement that $\eta \in \mathcal{K}$ needs to have ``small enough'' norm for \eqref{eq:Ntaylor} and \eqref{eq:Ltaylor} to hold could be replaced by a more concrete condition if the real part of the second component in $\tau$,~i.e.~the contact admittivity, were required to be bounded away from zero almost everywhere on $E$. Namely, it would be sufficient to require that $\eta$ is so small that the real parts of both components of $\tau + \eta$ remain bounded away from zero on $\Omega$ and $E$, respectively. However, if the real part of the contact admittivity is allowed vanish (or just to tend to zero) on $E$, one must allow the abstract condition on the size of (the second component of) an admissible perturbation~$\eta$.
          \end{remark}

Using~\eqref{eq:Pbound}, we get via the same line of reasoning as in \cite[Corollary 5.3]{garde2021series} the following bounds for the above introduced derivatives,
	\begin{align}
		\label{eq:DnNbound}
		\big\| D^k N(\tau) \big\|_{\mathscr{L}^k(\mathcal{K}, \mathscr{L}(\C^M_\diamond, \mathcal{H}))} &\leq \frac{k! C_{\mathcal{H}}}{c(\tau)^{k+1}}, \\[1mm]
		\label{eq:DnLambdabound}
		\| D^k \Lambda (\tau)\|_{\mathscr{L}^k(\mathcal{K}, \mathscr{L}(\C^M_\diamond))} &\leq \frac{k! C_{\mathcal{H}}^2}{c(\tau)^{k+1}},
	\end{align}
where $c(\tau)$ is the $\tau$-dependent coercivity constant from \eqref{eq:coer} and $C_{\mathcal{H}} > 0$ depends on the geometric setup.

        \subsubsection{General parametrization}

        Let us then consider a general parametrization of the total admittivity, namely a mapping
        \begin{equation}
          \label{eq:cond_par}
        \tau:
        \left\{
        \begin{array}{l}
          \iota \mapsto \tau(\iota), \\[1mm]
          \mathcal{I} \to \mathcal{K}_+,
        \end{array}
        \right.
        \end{equation}
        where $\mathcal{I}$ is a nonempty open subset of a Banach space $\mathcal{B}$. We define the parametrized solution and forward maps via
        \begin{equation}
          \label{eq:parametrized_maps}
        \widetilde{N}:
        \left\{
        \begin{array}{l}
          \iota \mapsto N(\tau(\iota)), \\[1mm]
          \mathcal{I} \to \mathscr{L}(\C^M_\diamond, \mathcal{H}),
        \end{array}
        \right.
        \qquad
        \widetilde{\Lambda}:
        \left\{
        \begin{array}{l}
          \iota \mapsto \Lambda(\tau(\iota)), \\[1mm]
          \mathcal{I} \to \mathscr{L}(\C^M_\diamond),
        \end{array}
        \right.
        \end{equation}
respectively. Based on the material in the previous section, it is obvious that the regularity of the composite maps $\iota \mapsto \widetilde{N}(\iota)$ and $\iota \mapsto \widetilde{\Lambda}(\iota)$ is dictated by the properties of the parametrization $\iota \mapsto \tau(\iota)$.

\begin{corollary}
  \label{corollary:para}
          If the parametrization in \eqref{eq:cond_par} is $l$ times Fr\'echet differentiable, then so are the parametrized maps $\widetilde{N}$ and $\widetilde{\Lambda}$ defined in \eqref{eq:parametrized_maps}. Moreover, if \eqref{eq:cond_par} is analytic, then
\begin{align*}
\widetilde{N}(\iota + \eta) &= \sum_{k = 0}^\infty \frac{1}{k!} D^k \widetilde{N}(\iota; \eta, \ldots, \eta), \\
\widetilde{\Lambda}(\iota + \eta) &= \sum_{k = 0}^\infty \frac{1}{k!} D^k \widetilde{\Lambda}(\iota; \eta, \ldots, \eta),
    \end{align*}
for $\iota \in \mathcal{I}$ and small enough $\eta \in \mathcal{B}$.
            \end{corollary}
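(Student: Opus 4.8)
The plan is to reduce both assertions to two classical facts of differential calculus in Banach spaces: a composition of $C^l$ maps is $C^l$, and a composition of analytic maps between Banach spaces is analytic. On the ``outer'' side everything is already in place. By Theorem~\ref{thm:lineartaylor}, $N$ is $C^\infty$ and analytic on the open set $\mathcal{K}_+$; by Remark~\ref{rem:Dtrace} the same holds for $\Lambda = TN$; and the trace operator $T \in \mathscr{L}(\mathcal{H}, \C^M_\diamond)$ is bounded and linear, hence analytic. Since $\widetilde{\Lambda} = T \circ \widetilde{N}$, it is enough to prove the statements for $\widetilde{N} = N \circ \tau$ and then postcompose with $T$; the derivatives and Taylor coefficients of $\widetilde{\Lambda}$ are then obtained as $D^k \widetilde{\Lambda}(\iota;\,\cdot\,) = T\, D^k \widetilde{N}(\iota;\,\cdot\,)$, by linearity and boundedness of $T$.

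For the finite-order claim I would argue as follows. The range of $\tau$ lies in the open set $\mathcal{K}_+$ on which $N$ is, in particular, $l$ times continuously Fr\'echet differentiable, and $\tau\colon \mathcal{I} \to \mathcal{K}_+$ is $l$ times Fr\'echet differentiable by hypothesis. The chain rule for Fr\'echet derivatives, iterated $l$ times --- equivalently, the Banach-space version of the Fa\`a di Bruno formula --- then shows that $\widetilde{N}$ is $l$ times Fr\'echet differentiable on $\mathcal{I}$, with $D^k \widetilde{N}(\iota;\eta_1,\dots,\eta_k)$, $1 \le k \le l$, written as a finite sum over set partitions $\pi$ of $\{1,\dots,k\}$ of terms of the form $D^{|\pi|} N\bigl(\tau(\iota); (D^{|B|}\tau(\iota;\eta_B))_{B \in \pi}\bigr)$. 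This step is purely local and algebraic; no coercivity or smallness considerations enter, so it is routine.

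For the analytic claim I would invoke the composition theorem for analytic maps between Banach spaces: if $\tau$ is analytic at $\iota$ and $N$ is analytic at $\tau(\iota)$, then $\widetilde{N} = N \circ \tau$ is analytic at $\iota$, and its local power series is the one obtained by substituting the power series of $\eta \mapsto \tau(\iota+\eta) - \tau(\iota)$ into the power series \eqref{eq:Ntaylor} of $N$ about $\tau(\iota)$ and reordering. Analyticity of $\widetilde{N}$ at $\iota$ is, by definition, exactly the assertion $\widetilde{N}(\iota+\eta) = \sum_{k=0}^\infty \frac{1}{k!} D^k \widetilde{N}(\iota;\eta,\dots,\eta)$ for all $\eta$ in some ball of $\mathcal{B}$, and postcomposing with $T$ yields the corresponding series for $\widetilde{\Lambda}$. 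Concretely, the admissible $\eta$ are those small enough that simultaneously $\iota + \eta \in \mathcal{I}$ (which is open) and $\tau(\iota+\eta)$ lies in a ball around $\tau(\iota)$ on which the series \eqref{eq:Ntaylor} converges to $N(\tau(\iota)+\,\cdot\,)$; the bound \eqref{eq:DnNbound} shows such a ball has radius at least $c(\tau(\iota)) > 0$, and continuity of $\tau$ guarantees $\|\tau(\iota+\eta) - \tau(\iota)\|_{\mathcal{K}} \to 0$ as $\eta \to 0$, so a suitable ball of admissible $\eta$ exists.

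The only point that is not mere bookkeeping --- and where I expect the real work to sit --- is the justification that the formal substitution of one convergent Banach-space power series into another converges absolutely and represents the composite map. The clean way to do this is a majorant argument: majorize the symmetric multilinear coefficients of $N$ about $\tau(\iota)$ by the geometric-type bound coming from \eqref{eq:DnNbound} (radius $c(\tau(\iota))$) and those of $\tau$ about $\iota$ by its analyticity, then compare the resulting double series term by term with the composition of two scalar majorant series, whose convergence for small real argument is elementary; absolute convergence of the majorant then permits free reordering of the double series and identification of the result with $\widetilde{N}(\iota+\eta)$. This is precisely the mechanism already used to establish \eqref{eq:Ntaylor} for $N$ itself in \cite{garde2021series}, so I would cite that development, together with standard references on analyticity in Banach spaces, rather than reproduce the estimates.
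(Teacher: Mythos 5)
Your proposal is correct and follows essentially the same route as the paper: the paper's proof is a one-line appeal to the chain rule (for the finite-order claim) and to the standard composition theorem for analytic maps between Banach spaces (citing \cite{whittlesey1965}), which is exactly the structure you lay out. The extra detail you supply --- the Fa\`a di Bruno bookkeeping and the majorant argument for substituting one convergent power series into another --- is precisely what the paper delegates to the cited reference.
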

            \begin{proof}
              The result follows from the chain rule and the basic properties of analytic maps on Banach spaces; see,~e.g.,~\cite{whittlesey1965}.
              \end{proof}

            Since we know the derivatives of $\tau \mapsto N(\tau)$ and $\tau \mapsto \Lambda(\tau)$, it would be possible to introduce explicit formulas for the $k$th derivatives of the parametrized maps $\iota \mapsto \widetilde{N}(\iota)$ and $\iota \mapsto \widetilde{\Lambda}(\iota)$ based on the first $k$ derivatives of $\iota \mapsto \tau(\iota)$ and thereby to more explicitly characterize the Taylor series in Corollary~\ref{corollary:para}; see,~e.g.,~\cite[Formula~A]{Fraenkel78}. However, as we do not need such general formulas for introducing our numerical schemes, but truncated versions of the aforementioned Taylor series are sufficient for our purposes, we settle with writing down explicit expressions for the first three Fr\'echet derivatives of $\iota \mapsto \widetilde{N}(\iota)$ and $\iota \mapsto \widetilde{\Lambda}(\iota)$ in what follows.

	To retain readability of the series reversion formulas presented in Section~\ref{sec:seriesreversion}, we use the following shorthand notation for derivatives of $\iota \mapsto \tau(\iota)$ in the argument of $P_{\tau(\iota)}$,
	\begin{equation*}
		P_{\tau(\iota)}\big(D^k\tau(\iota; \eta_1, \ldots, \eta_k)\big) = P_{\tau^{(k)}}(\iota; \eta_1, \dots , \eta_k), \qquad \eta_1, \dots, \eta_k \in \mathcal{B},
	\end{equation*}
	with an even more compact variant when $\eta_1 = \cdots = \eta_m$,
	\begin{equation*}
		P_{\tau(\iota)}\big(D^k\tau(\iota; \eta, \ldots, \eta)\big) = P_{\tau^{(k)}}(\iota; \eta^k).
	\end{equation*}
        The nonlinear dependence on $\iota \in \mathcal{I}$ is often not explicitly marked for the sake of brevity in what follows.

	\begin{lemma}
          \label{lemma:tilde_deriv}
		The first three derivatives of $\iota \mapsto \widetilde{N}(\iota)$ allow the representations
		\begin{align}
                \label{eq:DNt1}  D \widetilde{N}(\iota; \eta_1) &= P_{\tau'}(\eta_1) \widetilde{N}(\iota), \\[1mm]
			D^2 \widetilde{N}(\iota; \eta_1, \eta_2) &= \big{(}P_{\tau'}(\eta_1) P_{\tau'}(\eta_2) + P_{\tau'}(\eta_2) P_{\tau'}(\eta_1) + P_{\tau''}(\eta_1, \eta_2)\big{)} \widetilde{N}(\iota), \nonumber \\[1mm]
			D^3 \widetilde{N}(\iota; \eta_1, \eta_2, \eta_3)&  = \big{(} P_{\tau'}(\eta_1 ) P_{\tau'}(\eta_2 ) P_{\tau'}(\eta_3) + P_{\tau'}(\eta_1) P_{\tau'}(\eta_3) P_{\tau'}(\eta_2) + P_{\tau'}(\eta_2 ) P_{\tau'}(\eta_1) P_{\tau'}(\eta_3) \nonumber \\
			&\qquad +P_{\tau'}(\eta_2) P_{\tau'}(\eta_3) P_{\tau'}(\eta_1) +
			P_{\tau'}(\eta_3) P_{\tau'}(\eta_1) P_{\tau'}(\eta_2) + P_{\tau'}(\eta_3) P_{\tau'}(\eta_2) P_{\tau'}(\eta_1) \nonumber \\
			& \qquad +P_{\tau'}(\eta_1)P_{\tau''}(\eta_2, \eta_3) + P_{\tau''}(\eta_2, \eta_3) P_{\tau'}(\eta_1) + P_{\tau'}(\eta_2)P_{\tau''}(\eta_1, \eta_3) \nonumber \\
			& \qquad +P_{\tau''}(\eta_1, \eta_3) P_{\tau'}(\eta_2) + P_{\tau'}(\eta_3)P_{\tau''}(\eta_1, \eta_2) + P_{\tau''}(\eta_1, \eta_2) P_{\tau'}(\eta_3) \nonumber \\
			&\qquad +P_{\tau'''}(\eta_1, \eta_2, \eta_3) \big{)} \widetilde{N}(\iota). \nonumber
		\end{align}
		In particular, if $\eta_1 = \ldots = \eta_k = \eta$, the second and third directional derivatives become
		\begin{align}
			\label{eq:DNt2}
			D^2 \widetilde{N}(\iota; \eta^2) = \big{(}&2 P_{\tau'}(\eta)^2 + P_{\tau''}(\eta^2)\big{)} \widetilde{N}(\iota), \\[1mm]
			\label{eq:DNt3}
			D^3 \widetilde{N}(\iota; \eta^3) = \big{(}& 6 P_{\tau'}(\eta)^3 + 3 P_{\tau''}(\eta^2) P_{\tau'}(\eta) 
			+ 3 P_{\tau'}(\eta) P_{\tau''}(\eta^2) + P_{\tau'''}(\eta^3) \big{)} \widetilde{N}(\iota).
		\end{align}
                The corresponding derivatives of the parametrized forward map $\iota \mapsto \widetilde{\Lambda}(\iota)$ can be obtained from the above formulas by operating with $T$ from the left.
	\end{lemma}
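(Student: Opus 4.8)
The plan is to combine Theorem~\ref{thm:lineartaylor}, which supplies the derivatives of the unparametrized map $\tau \mapsto N(\tau)$, with the Fa\`a di Bruno–type chain rule for Fr\'echet derivatives of the composite $\widetilde{N} = N \circ \tau$. Concretely, I would start from \eqref{eq:DkN} and differentiate the composition $\iota \mapsto N(\tau(\iota))$ successively, each time using the product rule together with Lemma~\ref{lem:DP} (in the form \eqref{eq:product}, $D_\tau P_\tau(\,\cdot\,;\eta) = P_\tau(\eta) P_\tau(\,\cdot\,)$) to differentiate factors of the type $P_{\tau(\iota)}$. The key bookkeeping device is that differentiating $P_{\tau(\iota)}(\mu)$ with respect to $\iota$ in a direction $\eta$ produces two contributions: one from the inner argument, $P_{\tau(\iota)}(D\tau(\iota;\eta)\,\text{applied to }\mu)$-type terms captured by the shorthand $P_{\tau^{(k)}}$, and one from the "outer" $\tau$-dependence of $P$ itself, namely $P_{\tau'}(\eta) P_{\tau(\iota)}(\mu)$. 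Tracking these two sources through one, two, and three differentiations yields exactly the stated expressions.

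First I would verify \eqref{eq:DNt1}: since $\widetilde{N}(\iota) = N(\tau(\iota))$, the chain rule gives $D\widetilde{N}(\iota;\eta_1) = DN(\tau(\iota); D\tau(\iota;\eta_1)) = P_{\tau(\iota)}(D\tau(\iota;\eta_1)) N(\tau(\iota)) = P_{\tau'}(\eta_1)\widetilde{N}(\iota)$, using \eqref{eq:DkN} with $k=1$. Next, differentiating $D\widetilde{N}(\iota;\eta_1) = P_{\tau'}(\eta_1)\widetilde{N}(\iota)$ in direction $\eta_2$, the product rule gives a term where $\widetilde{N}$ is differentiated, contributing $P_{\tau'}(\eta_1) P_{\tau'}(\eta_2)\widetilde{N}(\iota)$, and a term where $P_{\tau(\iota)}(D\tau(\iota;\eta_1))$ is differentiated. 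For the latter, the argument $D\tau(\iota;\eta_1)$ differentiates to $D^2\tau(\iota;\eta_1,\eta_2)$, giving $P_{\tau''}(\eta_1,\eta_2)\widetilde{N}$, while differentiating the $\tau(\iota)$-dependence of $P$ via \eqref{eq:product} yields $P_{\tau'}(\eta_2) P_{\tau'}(\eta_1)\widetilde{N}$. Summing the three contributions reproduces the stated $D^2\widetilde{N}$. The third-order formula follows by the same procedure applied once more: each of the three summands in $D^2\widetilde{N}$ is differentiated in $\eta_3$, and every factor $P_{\tau'}(\eta_i)$, $P_{\tau''}(\eta_i,\eta_j)$, and the trailing $\widetilde{N}$ contributes according to the same two rules, producing the fifteen terms displayed (six triple products of $P_{\tau'}$, six mixed products involving one $P_{\tau''}$, one $P_{\tau'''}$, plus the $P_{\tau'''}$ term). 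The diagonal specializations \eqref{eq:DNt2} and \eqref{eq:DNt3} are then obtained by setting $\eta_1 = \cdots = \eta_k = \eta$ and collecting equal terms, using symmetry of higher derivatives in their arguments to merge, e.g., the six triple products into $6P_{\tau'}(\eta)^3$ and the two terms $P_{\tau'}(\eta)P_{\tau''}(\eta^2)$, $P_{\tau''}(\eta^2)P_{\tau'}(\eta)$ with multiplicity three each.

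The final sentence — that the corresponding derivatives of $\widetilde{\Lambda}$ are obtained by applying $T$ from the left — is immediate from $\widetilde{\Lambda} = T\widetilde{N}$ (cf.\ Remark~\ref{rem:Dtrace}), since $T \in \mathscr{L}(\mathcal{H},\C_\diamond^M)$ is linear and bounded and therefore commutes with Fr\'echet differentiation.

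I do not expect any genuine obstacle here: the smoothness of $P_{\tau(\iota)}$ and $\widetilde{N}$ is already guaranteed by Lemma~\ref{lem:DP}, Theorem~\ref{thm:lineartaylor}, and Corollary~\ref{corollary:para}, so the argument is purely a matter of organizing the product rule. The only place demanding care is the combinatorial bookkeeping in the third derivative — ensuring that all fifteen terms appear with the correct arguments and in the correct order (the operators $P_{\tau'}$, $P_{\tau''}$ generally do not commute), and that the subsequent collapse to the diagonal form tracks multiplicities correctly. This is routine but error-prone, so I would cross-check the count against the known structure of Fa\`a di Bruno's formula: the partitions of $\{1,2,3\}$ are $\{1\}\{2\}\{3\}$, three partitions of shape $\{i,j\}\{k\}$, and $\{1,2,3\}$, and summing over orderings of the blocks (which encodes the noncommutativity of the $P$ factors) gives $3! = 6$, $3\cdot 2! = 6$, and $1$ term respectively — matching the displayed formula.
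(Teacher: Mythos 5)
Your proposal is correct and follows essentially the same route as the paper, whose proof is simply ``combine \eqref{eq:DkN} with the chain rule for Banach spaces''; your expansion of that one-liner, separating the contribution from the inner argument $D^k\tau$ and the outer $\tau$-dependence of $P_\tau$ via \eqref{eq:product}, is exactly the intended bookkeeping. One small slip: the third-order formula contains thirteen terms ($6+6+1$, as your own Fa\`a di Bruno partition count confirms), not fifteen.
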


        \begin{proof}
          The results follow by combining \eqref{eq:DkN} with the chain rule for Banach spaces.
        \end{proof}

        Explicit bounds on the operator norms of the derivatives $D^k \widetilde{N}(\iota) \in \mathscr{L}^k(\mathcal{B}, \mathscr{L}(\C^M_\diamond, \mathcal{H}))$ and $D^k \widetilde{\Lambda}(\iota) \in \mathscr{L}^k(\mathcal{B}, \mathscr{L}(\C^M_\diamond))$ could be straightforwardly deduced based on the general form for the differentiation formulas in Lemma~\ref{lemma:tilde_deriv}, assumed bounds on the derivatives of the parametrization $\iota \mapsto \tau(\iota)$ and \eqref{eq:Pbound}; cf.~\eqref{eq:DnNbound} and \eqref{eq:DnLambdabound}. However, we content ourselves with simply noting that for any $k \in \N$ and $\omega \in \mathcal{I}$, there exists a constant $C_k(\omega)>0$ and an open neighborhood $\mathcal{N}_\omega$ of $\omega$ in $\mathcal{I}$ such that
        \begin{equation}
          \label{eq:deri_bound_tilde}
       \big\| D^k \widetilde{\Lambda}(\iota) \big\|_{\mathscr{L}^k(\mathcal{B}, \mathscr{L}(\C^M_\diamond))} \leq C_k(\omega) \qquad \text{for all } \iota \in \mathcal{N}_\omega
       \end{equation}
        if the mapping $\iota \mapsto \tau(\iota)$ is $k$ times continuously differentiable; cf.~\eqref{eq:coer} and \eqref{eq:DnLambdabound}.

\section{Series reversion}
\label{sec:seriesreversion}
In this section we introduce the series reversion procedure for the SCEM with a general admittivity parametrization. The derivation closely follows the ideas in~\cite{garde2021series} with only minor modifications and extensions. In fact, with the trivial parametrization $\tau = {\rm id}$ and $\mathcal{I} = \mathcal{K}_+$, the deduced reversion formulas are exactly the same as in \cite{garde2021series}, although the ones presented in this work also implicitly account for the boundary admittivity. However, as the derivatives of the forward map corresponding to a general admittivity parametrization have terms that do not appear if $D^k\tau = 0$ for $k \in \N \setminus \{1\}$ (compare \eqref{eq:DNt1}, \eqref{eq:DNt2} and \eqref{eq:DNt3} to \cite[eq. (5.2)]{garde2021series}), we end up with more terms in the series reversion formulas as well. For this reason, we content ourselves with third order series reversion even though the same approach could be straightforwardly, yet tediously, extended to arbitrarily high orders. What is more, Remark~\ref{remark:nonsymmetric} comments on a case were the current-feeding and potential-measuring electrodes need not be the same, which is a case not explicitly covered in \cite{garde2021series}. Because a translation is an analytic mapping, we may assume without loss of generality that the origin of the Banach space $\mathcal{B}$ acts as the initial guess for the to-be-reconstructed parameters and, in particular, that the origin belongs to the open parameter set $\mathcal{I} \subset \mathcal{B}$; cf.~Section~\ref{sec:param}.

Let $\mathcal{W} \subset \mathcal{B}$ be a subspace that defines the admissible perturbation directions in our parametrization for the interior and boundary admittivities. Throughout this section, the target admittivity pair is defined as $(\sigma, \zeta) = \tau(\upsilon)$ for a fixed $\upsilon \in \mathcal{I} \cap \mathcal{W}$ and an analytic parametrization $\tau: \mathcal{I} \to \mathcal{K}_+$. Furthermore, define $F = D\widetilde{\Lambda}(0; \, \cdot \,)$ and $\mathcal{Y} = F(\mathcal{W})$. We work under the following, arguably rather restrictive, main assumption.

\begin{assumption}
	\label{ass:only}
	The Fr\'echet derivative $F: \mathcal{B} \to \mathcal{L}(\C^M_\diamond)$ is injective on $\mathcal{W}$.\footnote{Assumption~\ref{ass:only} can be satisfied by choosing the number of electrodes $M$ high enough compared to the dimension of a suitably constructed $\mathcal{W}$, if the contact admittances are fixed; see \cite{Lechleiter_08b, Harrach_2019} for more information. In general, since $\mathcal{W}$ must be finite-dimensional for the assumption to hold, one may try to numerically verify if $F$ is injective.}
\end{assumption}

Since $\mathcal{L}(\C^M_\diamond)$ is finite-dimensional,  the same must also apply to $\mathcal{W}$ by virtue of Assumption~\ref{ass:only}. In consequence, $F \in \mathcal{L}(\mathcal{W}, \mathcal{Y})$ has a bounded inverse $F^{-1} \in \mathcal{L}(\mathcal{Y}, \mathcal{W})$. Moreover, there obviously exists a bounded projection $Q \in \mathcal{L}(\mathcal{L}(\C^M_\diamond), \mathcal{Y})$ onto $\mathcal{Y}$ due to the finite-dimensionality of the involved subspaces.

Let us then get into the actual business. The third order Taylor expansion for $\widetilde{\Lambda}$ around the origin can be arranged into the form
\begin{equation*}
	F\upsilon = D\widetilde{\Lambda}(0; \upsilon) = \widetilde{\Lambda}(\upsilon) - \widetilde{\Lambda}(0) - \frac{1}{2}D^2 \widetilde{\Lambda}(0; \upsilon^2) - \frac{1}{6} D^3 \widetilde{\Lambda}(0; \upsilon^3) - \frac{1}{6}\int_0^1 (1-s)^3 D^4 \widetilde{\Lambda}(s \upsilon; \upsilon^4) \, {\rm d} s
\end{equation*}
assuming that $\upsilon \in \mathcal{W}$ is small enough so that the whole line segment $[0, \upsilon] = \{ \iota \in \mathcal{B} \ | \ \iota = t \upsilon, \ t \in [0,1] \}$ lies in $\mathcal{I}$. The remainder term is of order $O(\|\upsilon\|_{\mathcal{B}}^4)$ due to \eqref{eq:deri_bound_tilde}. Operating with $\mathcal{M} := F^{-1} Q \in \mathcal{L}(\mathcal{L}(\C^M_\diamond), \mathcal{W})$ from the left  yields
\begin{align}
	\label{eq:etaTaylor}
	\upsilon = F^{-1}Q F \upsilon = \mathcal{M}\big(\widetilde{\Lambda}(\upsilon) - \widetilde{\Lambda}(0)\big) - \frac{1}{2} \mathcal{M} D^2 \widetilde{\Lambda}(0; \upsilon^2) - \frac{1}{6} \mathcal{M} D^3 \widetilde{\Lambda}(0; \upsilon^3) + O(\|\upsilon\|_{\mathcal{B}}^4)
\end{align}
because by assumption $\upsilon \in \mathcal{W}$ and $Q$ is a projection onto $\mathcal{Y}$.

At this point, it is worthwhile to stop for a moment and consider what we have actually derived.
\begin{itemize}
	\item The residual term $\widetilde{\Lambda}(\upsilon) - \widetilde{\Lambda}(0)$ in \eqref{eq:etaTaylor} only contains values that are known. Namely, $\widetilde{\Lambda}(\upsilon)$ is the available data and $\widetilde{\Lambda}(0)$ corresponds to an initial guess $\upsilon = 0$.
	\item Since the second and third derivatives of $\widetilde{\Lambda}$ in  \eqref{eq:etaTaylor} are of orders $O(\|\upsilon\|_{\mathcal{B}}^2)$ and $O(\|\upsilon\|_{\mathcal{B}}^3)$, respectively, by virtue of \eqref{eq:deri_bound_tilde}, the expansion \eqref{eq:etaTaylor} also provides first and second order approximations for~$\upsilon$.
	\item Since $\mathcal{W}$ and $\mathcal{L}(\C^M_\diamond)$ are finite-dimensional, the operator $\mathcal{M}$ can simply be implemented as a pseudoinverse of $F: \mathcal{W} \to \mathcal{L}(\C^M_\diamond)$ after introducing inner products for the involved spaces.
\end{itemize}
Higher order approximations for $\upsilon$ can naturally be derived by applying the same technique to higher order Taylor expansions of $\widetilde{\Lambda}$ (cf.~Corollary~\ref{corollary:para}).

Our leading idea is to recursively insert \eqref{eq:etaTaylor} into its right-hand side, which results in an explicit formula that returns $\upsilon$ up to an error of order $O(\|\upsilon\|_{\mathcal{B}}^4)$. Let us start by making \eqref{eq:etaTaylor} more explicit by plugging in \eqref{eq:DNt2} and \eqref{eq:DNt3} at $\iota = 0$ composed with $T$:
\begin{align}
	\label{eq:Psinbrackets}
	\upsilon  &= \mathcal{M}\big(\widetilde{\Lambda}(\upsilon) - \widetilde{\Lambda}(0)\big) - \frac{1}{2} \mathcal{M} T \Big( 2 P_{\tau'}(0;\upsilon)^2 + P_{\tau''}(0;\upsilon^2) \Big) \widetilde{N}(0) \\ \nonumber
 & \qquad  - \frac{1}{6} \mathcal{M} T \Big( 6 P_{\tau'}(0;\upsilon)^3 + 3 P_{\tau''}(0;\upsilon^2) P_{\tau'}(0;\upsilon) \\
	\nonumber
	& \qquad \qquad \qquad + 3P_{\tau'}(0;\upsilon) P_{\tau''}(0;\upsilon^2) + P_{\tau'''}(0;\upsilon^3) \Big) \widetilde{N}(0) + O(\|\upsilon\|_{\mathcal{B}}^4).
\end{align}
    Here and in the following, $\mathcal{M}$ always operates on the entire composition of operators on its right. Take note that the bounded operators $\mathcal{M}$, $T$ and $\widetilde{N}(0)$ do not depend on $\upsilon$, and thus they do not affect the asymptotic accuracy of the representation. In the following, the dependence on the base point $\iota = 0$ is  suppressed for the sake of brevity.

As the first order estimate, we immediately obtain
\begin{equation}
	\label{eq:eta1}
	\eta_1 := \mathcal{M}\big(\widetilde{\Lambda}(\upsilon) - \widetilde{\Lambda}(0)\big) = \upsilon + O(\|\upsilon\|_{\mathcal{B}}^2),
\end{equation}
    which corresponds to a linearization of the original reconstruction problem. Replacing $\upsilon$ by $\eta_1$ in the second term on the right-hand side of \eqref{eq:Psinbrackets} and employing boundedness of the involved linear operators, it straightforwardly follows that
\begin{align}
	\label{eq:eta2}
	\eta_2 &:= - \frac{1}{2}  \mathcal{M} T \Big( 2 P_{\tau'}(\eta_1)^2 + P_{\tau''}(\eta_1^2) \Big) \widetilde{N}
	= - \frac{1}{2} \mathcal{M}  T \Big( 2 P_{\tau'}(\upsilon)^2 + P_{\tau''}(\upsilon^2) \Big) \widetilde{N} + O(\|\upsilon\|_{\mathcal{B}}^3).
\end{align}
Obviously, $\eta_2$ is altogether of order $O(\|\upsilon\|_{\mathcal{B}}^2)$. Moreover, making the substitution \eqref{eq:eta2} in \eqref{eq:Psinbrackets} leads to the conclusion $\upsilon = \eta_1 + \eta_2 + O(\|\upsilon\|_{\mathcal{B}}^3)$.

In order to derive the third order approximation, we start by reconsidering the second term on the right-hand side of \eqref{eq:Psinbrackets}. Replacing $\upsilon$ this time around with $\eta_1 + \eta_2 + O(\|\upsilon\|_{\mathcal{B}}^3)$ leads to
\begin{align}
       \label{eq:D2estimate}
	- \frac{1}{2}  \mathcal{M} T \Big( 2 P_{\tau'}(\upsilon)^2 + P_{\tau''}(\upsilon^2) \Big) \widetilde{N}
	&= - \frac{1}{2} \mathcal{M} T \Big( 2P_{\tau'}(\eta_1)^2 +  P_{\tau''}(\eta_1^2) \nonumber \\ \nonumber
	& \qquad \qquad \quad + 2P_{\tau'}(\eta_1)P_{\tau'}(\eta_2) + 2P_{\tau'}(\eta_2)P_{\tau'}(\eta_1)  \\ \nonumber
	&\qquad \qquad \quad + P_{\tau''}(\eta_1, \eta_2) + P_{\tau''}(\eta_2, \eta_1) \Big) \widetilde{N} + O(\|\upsilon\|_{\mathcal{B}}^4)\\[1mm]
	&= \eta_2 - \mathcal{M} T \Big( P_{\tau'}(\eta_1)P_{\tau'}(\eta_2) + P_{\tau'}(\eta_2)P_{\tau'}(\eta_1) \\ &\qquad \qquad \qquad + P_{\tau''}(\eta_1, \eta_2) \Big) \widetilde{N} + O(\|\upsilon\|_{\mathcal{B}}^4), \nonumber
\end{align}
    where all higher order terms have been collected in $O(\|\upsilon\|_{\mathcal{B}}^4)$. Let us then tackle the third term on the right-hand side of \eqref{eq:Psinbrackets}. As it is of third order in~$\upsilon$, inserting the first order estimate $\upsilon = \eta_1 + O(\|\upsilon\|_{\mathcal{B}}^2)$ is sufficient for our purposes:
\begin{align}
	\label{eq:D3estimate}
	-\frac{1}{6} \mathcal{M} T & \Big( 6 P_{\tau'}(\upsilon)^3 + 3 P_{\tau''}(\upsilon^2) P_{\tau'}(\upsilon)	+ 3P_{\tau'}(\upsilon) P_{\tau''}(\upsilon^2) + P_{\tau'''}(\upsilon^3) \Big) \widetilde{N} \nonumber \\
	=& -  \mathcal{M} T \Big( P_{\tau'}(\eta_1)^3 + \frac{1}{2} P_{\tau''}(\eta_1^2) P_{\tau'}(\eta_1)
	+ \frac{1}{2} P_{\tau'}(\eta_1) P_{\tau''}(\eta_1^2) + \frac{1}{6}P_{\tau'''}(\eta_1^3) \Big) \widetilde{N} + O(\|\upsilon\|_{\mathcal{B}}^4).
\end{align}
Combining the observations in \eqref{eq:D2estimate} and \eqref{eq:D3estimate}, it is natural to define
\begin{align}
	\label{eq:eta3}
	\eta_3 &= - \mathcal{M} T \Big( P_{\tau'}(\eta_1)^3 + \frac{1}{2} P_{\tau''}(\eta_1^2) P_{\tau'}(\eta_1) + \frac{1}{2} P_{\tau'}(\eta_1)  P_{\tau''}(\eta_1^2) + \frac{1}{6}P_{\tau'''}(\eta_1^3) \\ \nonumber
        	&  \qquad \qquad \ +  P_{\tau'}(\eta_1)P_{\tau'}(\eta_2) + P_{\tau'}(\eta_2)P_{\tau'}(\eta_1) + P_{\tau''}(\eta_1, \eta_2) \Big) \widetilde{N},
\end{align}
which is of order $O(\|\upsilon\|_{\mathcal{B}}^3)$.

Summarizing the information in \eqref{eq:Psinbrackets}--\eqref{eq:eta3}, we have demonstrated under Assumption~\ref{ass:only} that
\begin{equation}
	\label{eq:errors}
	\bigg\| \upsilon - \sum_{k=1}^K \eta_k \bigg\|_{\mathcal{B}} \leq C_K \|\upsilon \|_{\mathcal{B}}^{K+1}, \qquad K=1, \dots, 3,
\end{equation}
where $C_K>0$ is independent of small enough $\upsilon \in \mathcal{B}$. Take note that the definition of the components in the reconstruction is recursive: $\eta_1$ depends on the residual $\widetilde{\Lambda}(\upsilon) - \widetilde{\Lambda}(0)$, $\eta_2$ depends on $\eta_1$, and $\eta_3$ depends on both $\eta_1$ and $\eta_2$.  Although the Taylor series representation for $\widetilde{\Lambda}$ is available only if $\tau: \mathcal{I} \to \mathcal{K}_+$ is analytic, it is easy to check via Taylor's theorem that the parametrization actually only needs to be $K+1$ times continuously differentiable for the conclusion \eqref{eq:errors} to hold.

\begin{remark}
          \label{remark:nonsymmetric}
	  In the above derivation we tacitly assumed that the complete set of electrodes is available both for feeding currents and measuring voltages. However, this is not the case in,~e.g.,~geophysical electrical resistivity tomography~\cite{Ducut22}, where the injecting and measuring electrodes are separate. To overcome this limitation, let us define
          $$
          \C^M_{\rm in} = \{ J \in \C^M_\diamond \ | \ J_m = 0 \textrm{ if } E_m \text{ is not a current-feeding electrode} \}
          $$
          and
          $$
          \C^M_{\rm out} = \{ J \in \C^M_\diamond \ | \ J_m = 0 \textrm{ if } E_m \text{ is not a potential-measuring electrode} \}.
          $$
          The orthogonal projections of $\C^M_\diamond$ onto the subspaces $\C^M_{\rm in}$ and $\C^M_{\rm out}$ are denoted by $\mathcal{P}_{\rm in}$ and $\mathcal{P}_{\rm out}$, respectively. The above analysis remains valid as such if one redefines $F$ as
          $$
          F =  \mathcal{P}_{\rm out} D\widetilde{\Lambda}(0;\, \cdot \,) \mathcal{P}_{\rm in}
          $$
          and $\mathcal{M}: \mathcal{L}(\C^M_\diamond) \to \mathcal{W}$ via
          $$
          \mathcal{M}L = F^{-1}Q \mathcal{P}_{\rm out}L \mathcal{P}_{\rm in}.
          $$
With this modification, the projected relative data $\mathcal{P}_{\rm out}(\widetilde{\Lambda}(\upsilon) - \widetilde{\Lambda}(0)) \mathcal{P}_{\rm in}$ takes the role of the residual $\widetilde{\Lambda}(\upsilon) - \widetilde{\Lambda}(0)$. It is obvious that (a noisy version of) the former is available when only using the current-feeding and potential-measuring electrodes in their respective roles.
\end{remark}

\section{Numerical implementation}
\label{sec:implementation}
In this section we expand on the initial two-dimensional numerical setup in~\cite[Section~7]{garde2021series} by considering the more practical framework of the three-dimensional SCEM. In particular, we purposefully include some modeling errors in our setting so that their effect can be studied in the numerical experiments of Section~\ref{sec:numerics}, and we also employ so dense discretizations for the internal admittivity that stable inversion of the Fr\'echet derivative of the forward map is impossible without regularization. In the following, we only consider real-valued admittivities,~i.e.,~{\em conductivities}.

The observations about the computational complexity of the CM based numerical implementation in \cite[Section~7]{garde2021series} remain essentially the same in our setting even though the underlying sesquilinear form is different, the reconstruction of the contact conductivities is included in the algorithm and general parametrizations of the unknowns are considered. As mentioned in the previous section, allowing arbitrary parametrizations for the conductivities introduces a number of extra terms in the reversion formulas, but the recursive computational structure of \eqref{eq:eta1}, \eqref{eq:eta2} and \eqref{eq:eta3} is in any case very similar to that of the respective terms $F_1$, $F_2$ and $F_3$ in \cite{garde2021series}. In consequence, the increase in the computational complexity is still a mere constant multiplier independent of the discretizations when the order of the inversion scheme is increased, albeit the constant is now larger due to the derivatives of the composite operator appearing in the reversion formulas for a general parametrization. As with the standard parametrization of the conductivity in~\cite{garde2021series}, we still only have to compute directional derivatives instead of the complete tensor-valued higher order derivatives of the forward operator~$N$.

\subsection{Parametrization and discretization}
\label{sec:param}
In all our experiments, the target domain $\Omega$ is the perturbed unit ball depicted in~Figure~\ref{fig:recs}. To enable avoiding an inverse crime between the measurement simulation and reconstruction stages, the surface of $\Omega$ is triangulated at two resolutions, and the resulting high (H) and low (L) resolution surface triangulations, $T_{\rm H}$ and $T_{\rm L}$, are independently tetrahedralized into polygonal domains $\Omega_{\rm H}$ and $\Omega_{\rm L}$, respectively.
In particular, the electrode positions are not accounted for when forming the surface meshes, which causes a certain discrepancy in the electrode geometries between the two discretizations, as explained in more detail below. The denser FE mesh consists of $23\,630$ nodes and $95\,951$ tetrahedra, while the corresponding surface mesh has $15\,584$ nodes and $31\,164$ triangles. The sparser FE mesh has $4984$ nodes and $18\,597$ tetrahedra, with the corresponding surface mesh composed of $3730$ nodes and $7456$ triangles.

The tetrahedralizations $\Omega_{\rm H}$ and $\Omega_{\rm L}$ are clustered into $N_{\rm H} = 4000$ and $N_{\rm L}= 1000$ polygonal connected subsets of roundish shape and approximately the same size, respectively, in order to introduce the subdomains for piecewise constant representations of the conductivity. These subdomains are denoted $\omega_{*,i} \subset \Omega_*$, $i = 1, \ldots N_*$, where the subindex $*$ stands for H or L. We represent the domain log-conductivity as
$$
\kappa_* = \mu_\kappa + \sum_{i=1}^{N_*} \kappa_i \mathbf{1}_{*,i}, \qquad \kappa = (\kappa_1, \dots, \kappa_{N_*}) \in \R^{N_*},
$$
where $\mathbf{1}_{*,i}$ is the indicator function of $\omega_{*,i}$ and $\mu_\kappa \in \R$ corresponds to information on the expected log-conductivity level in $\Omega$, making $\kappa = 0$ a reasonable basis point for the reversion formulas of Section~\ref{sec:seriesreversion}. The high and low resolution parametrizations for the domain conductivity are then defined via
\begin{equation}
  \label{eq:sigma_param}
\sigma_*:
\left\{
\begin{array}{l}
\kappa \mapsto {\rm e}^{\kappa_*} = {\displaystyle {\rm e}^{\mu_\kappa} \sum_{i=1}^{N_*} {\rm e}^{\kappa_i} \mathbf{1}_{*,i}}, \\[4mm]
  \R^{N_*} \to L^\infty_+(\Omega_*).
\end{array}
\right.
\end{equation}
The parameter vector $\kappa \in \R^{N_*}$ is identified with a shifted piecewise constant log-conductivity via writing $\kappa = \sum_{i=1}^{N_*} \kappa_i \mathbf{1}_{*,i}$ with respect to the appropriate partitioning.

The numerical experiments are performed with $M=24$ electrodes. Their midpoints $x_m$, $m=1, \dots, M$, are distributed roughly uniformly over $\partial \Omega$; cf.~Figure~\ref{fig:recs}. Assuming the triangles of $T_*$ are closed sets, the electrodes for the two levels of discretization are defined as the open polygonal surface patches
$$
E_{*, m} = {\rm int} \Big( \bigcup \big\{t \in T_* \ | \ c_t \in B(x_m, 0.3)  \big\} \Big), \qquad m=1, \dots, M,
$$
where $c_t$ is the centroid of the triangle $t$ and $B(x, r)$ denotes the open ball of radius $r$ centered at $x$. Because the surface triangulations $T_{\rm H}$ and $T_{\rm L}$ do not match, the corresponding electrodes do not exactly coincide either, i.e., $E_{\rm H,m} \not = E_{\rm L,m}$ for $m=1, \dots, M$. That is, we do not assume perfect prior knowledge on the electrode positions in the experiments where the measurement simulation and reconstruction are performed on different  meshes.

The contact conductivities are parametrized either as a function taking constant values on certain subsets of the electrodes and vanishing elsewhere or by a smooth function following the construction in~\cite{Candiani19}. The former is dubbed the {\em CEM parametrization} in reference to the standard approach of modeling the contact resistivity/conductivity as piecewise constant in the standard CEM, whereas the latter option is called the {\em smooth parametrization} since it is constructed in the spirit of the SCEM (cf.~\cite{Hyvonen17b}).

The CEM parametrization is only used for simulating measurement data, not for computing reconstructions. We do not assume there is contact everywhere on the surface patches identified as the electrodes,\footnote{In the terminology of \cite{darde2021electrodeless}, $E_{*,m}$ could be called an {\em extended electrode}.} but the corresponding contact regions are defined as
$$
e_{*, m} = {\rm int} \Big(\bigcup\big\{t \in T_* \ | \ c_t \in B(x_m, 0.2)  \big\}\Big) \subset E_{*, m}, \qquad m=1, \dots, M,
$$
i.e., by the same formula as the associated electrodes but with a smaller radius. The actual CEM parametrization is
\begin{equation}
  \label{eq:CEM_param}
\zeta_{*,{\rm CEM}}:
\left\{
\begin{array}{l}
{\displaystyle \theta \mapsto {\rm e}^{\mu_{\zeta}} \sum_{m=1}^{M} {\rm e}^{\theta_m} \frac{\tilde{\chi}_{{\rm *},m}}{|e_{{\rm *}, m}|}} , \\[4mm]
  \R^{M} \to L^\infty(\partial \Omega_*),
\end{array}
\right.
\end{equation}
where $\theta_m$ corresponds to the shifted log-conductance on $E_{*, m}$, $\mu_{\zeta} \in \R$ is the expected log-conductance, and $\tilde{\chi}_{*,m}$ is the indicator function of $e_{*, m}$.

With the smooth parametrization, employed for both measurement simulation and reconstruction on both discretizations, we aim to also demonstrate the functionality of the contact-adapting model of \cite{darde2021electrodeless} in a three-dimensional setup. That is, we do not only include the strength of the contact as a parameter in the smoothened model but also its relative location on the considered electrode. To this end, each electrode $E_{*,m}$ is projected onto an approximate tangent plane, obtained via a least squares fit, at the corresponding midpoint $x_m$. The coordinate system on the tangent plane is chosen so that the projection of the electrode just fits inside the square $[-1,1]^2$ in the local coordinates. There is an obvious freedom in choosing the orientation of the local coordinates, but we do not go into the details of our choice as we do not expect the precise orientation to have any meaningful effect on the numerical results. This construction defines a bijective change of coordinates
$$
\phi_{*,m}: E_{*,m} \to \phi_{*,m}(E_{*,m}) \subset [-1,1]^2, \qquad m = 1, \dots, M,
$$
on each electrode and for both levels of discretization, assuming the electrodes are small enough. On $[-1,1]^2$, we define a smooth surface conductivity shape
\begin{equation}
  \label{eq:smoothcontacts}
  \psi_{\xi}(y)=
  \left\{
  \begin{array}{ll}
    {\displaystyle \exp \! \left( a - \frac{a}{1 - \frac{|y - \xi |^2}{R^2} } \right)}, & \qquad   |y - \xi| < R, \\
    0 & \qquad \text{otherwise},
  \end{array}
  \right.
  \end{equation}
where the midpoint $\xi$ is considered an unknown in the reconstruction process, but the other two parameters are assigned fixed values $R = 0.6$ and $a = 4$. The former controls the width of the contact, whereas the latter fine-tunes its shape; in particular, the contact region on the parameter square is $D(\xi, R)$, i.e., the disk of radius $R$ centered at $\xi$.

The smooth conductivity parametrization on the $m$th electrode is defined as
\begin{equation}
  \label{eq:smth_component}
(\zeta_{*,{\rm smooth}})_m:
(\rho, \xi) \mapsto  {\rm e}^{\rho+\mu_{\zeta}} \frac{\psi_\xi \circ \phi_{*,m}}{\int_{E_{*,m}} \psi_\xi \circ \phi_{*,m} \, {\rm d} S } ,
\end{equation}
where ${\rm e}^{\rho+\mu_{\zeta}}$ is the net contact conductance on the electrode, i.e., the integral of the surface conductivity over the electrode, which makes $\rho+\mu_{\zeta}$ the log-conductance. All the strength and location parameter pairs $(\rho, \xi)$ of individual electrodes are collected into a single parameter vector~$\theta$, and the corresponding domain of definition $\mathcal{D}_{\rm smooth}$ for the smooth parametrization is defined to be the subset of $\R^{3M}$ defined by the condition that $\phi_{*,m}^{-1}(D(\xi, R)) \subset E_{*,m}$ for $m=1, \dots, M$, that is, the contact regions are required to be subsets of the respective electrodes. To summarize, we have arrived at the complete smooth parametrization:
\begin{equation}
  \label{eq:smooth_param}
\R^{3M} \supset \mathcal{D}_{\rm smooth} \ni \theta \mapsto \zeta_{*,{\rm smooth}}(\theta) \in L^\infty(\partial \Omega_*),
\end{equation}
where $\zeta_{*,{\rm smooth}}(\theta)$ behaves as indicated by \eqref{eq:smth_component} on $E_{*,m}$, with $(\rho, \xi)$ defined by the appropriate components of $\theta \in \mathcal{D}_{\rm smooth}$, and vanishes outside the electrodes.

The complete parametrization is finally obtained by combining \eqref{eq:sigma_param} with either \eqref{eq:CEM_param} or \eqref{eq:smooth_param} to form a mapping $\tau$ from the complete parameter vector $\iota = (\kappa, \theta) \in \mathcal{I} \subset \R^{N}$ to a pair of domain and boundary conductivities. Depending on the contact parametrization and the level of discretization, the number of degrees of freedom ranges from $N = N_{\rm L} + M = 1024$ to $N = N_{\rm H} + 3M = 4072$. It is straightforward to check that for any admissible $\iota$ in the sense of the above definitions, the resulting image $\tau(\iota)$ lies in a discretized version of $\mathcal{K}_+$. Moreover, the derivatives of the parametrization with respect to $\iota$ can be explicitly calculated in a straightforward but tedious manner. The various terms in the series reversion formulas can then be computed by combining these parameter derivatives  with the appropriate numerical solutions of the variational problems \eqref{eq:weak} and \eqref{eq:Pdef}, obtained by resorting to a custom FE solver built on top of scikit-fem~\cite{skfem2020}. We employ a mixed FE method with standard piecewise linear elements for the domain potential and certain macro-elements for the electrode potentials; see \cite{Vauhkonen97} for a comparable scheme.

\subsection{Regularization of the Fr\'echet derivative and Bayesian interpretation}
\label{sec:Moperator}

In the numerical computations, electrode current-to-voltage maps and their directional derivatives are represented as symmetric matrices with respect to a certain orthonormal basis of $\R^{M}_\diamond$; see,~e.g.,~\cite[Section~4]{Hyvonen18}. In particular, such a representation for (noiseless) data, say, $\Psi \in \R^{(M-1)\times (M-1)}$ is defined by only $M(M-1)/2 = 276$ real numbers when $M=24$. Taking into account the well-known severe illposedness of the inverse problem of EIT and the number of degrees of freedom in our parametrizations for the domain and boundary conductivities, it is clear that operating with $\mathcal{M}$ in the series reversion formulas must be performed in some regularized manner. In what follows, we denote by ${\rm vec}: \R^{(M-1)\times (M-1)} \to \R^{(M-1)^2}$ the columnwise vectorization operator.

Let $\Gamma_{\rm noise} \in \R^{(M-1)^2 \times (M-1)^2}$ and $\Gamma_{\rm pr} \in \R^{N \times N}$ be symmetric positive definite matrices. We define a regularized version for $\mathcal{M}$ appearing in the series reversion formulas via
\begin{equation}
  \label{eq:regM}
\mathcal{M}_{\rm R}:
\left\{
\begin{array}{l}
  \Psi \mapsto \eta_{\rm R}, \\[1mm]
  \R^{(M-1)\times (M-1)} \to \R^N,
\end{array}
\right.
\end{equation}
where $\eta_{\rm R}$ is the minimizer of a Tikhonov functional,
\begin{equation}
  \label{eq:tikhonov}
\eta_{\rm R} =
\arg\!\!\min_{\eta \in \R^N} \Big(\big\| {\rm vec} (D \widetilde{\Lambda}(0; \eta) - \Psi) \big\|_{\Gamma_{\rm noise}^{-1}}^2 + \| \eta \|_{\Gamma_{\rm pr}^{-1}}^2\Big).
\end{equation}
Here, the standard notation $\| z \|^2_{A} = z^{\rm T} A z$ is used. This choice of regularization is motivated by Bayesian inversion \cite{Kaipio04a}. To be more precise, $\eta_{\rm R}$ is the {\em maximum a posteriori} (MAP) or conditional mean estimate for $\eta$ in the equation $D \widetilde{\Lambda}(0; \eta) = \Psi$ if the vectorized data ${\rm vec}(\Psi)$ is contaminated by additive zero-mean Gaussian noise with the covariance $\Gamma_{\rm noise}$ and $\eta$ is {\em a priori} normally distributed with a vanishing mean and the covariance $\Gamma_{\rm pr}$.

When computing the first order estimate $\eta_1$, replacing $\mathcal{M}$ by $\mathcal{M}_{\rm R}$ in \eqref{eq:eta1} corresponds to solving the linearized reconstruction problem with the aforementioned assumptions on the additive measurement noise contaminating the data $\widetilde{\Lambda}(\upsilon)$ and prior information on the parameter vector $\upsilon$. However, due to the nonlinear and recursive nature of the formulas defining the higher order terms $\eta_2$ and $\eta_3$ in the reconstruction, there is no obvious reason why this same choice of regularization for $\mathcal{M}$ would also be well-advised in \eqref{eq:eta2} and \eqref{eq:eta3} from the Bayesian perspective. Be that as it may, we exclusively employ this same regularization for all occurrences of $\mathcal{M}$ in our numerical tests, although we acknowledge that a more careful study on the choice of regularization in connection to the series reversion technique would be in order.

Let us then introduce the prior models employed in \eqref{eq:tikhonov}. We resort to a standard Gaussian smoothness prior for the domain log-conductivity, with the covariance matrix defined elementwise as
	\begin{equation}
	\label{eq:domaincovariance}
		(\Gamma_\kappa)_{i,j} = \gamma^2_\kappa \exp\left( -\frac{|z_{*,i} - z_{*,j}|^2}{2 \lambda^2_\kappa} \right), \qquad i,j = 1,\dots,N_*,
	\end{equation}
where $z_{*,i}$ is the center of $\omega_{*,i}$ in the partitioning of $\Omega_*$. Moreover, $\gamma^2_\kappa$ and $\lambda_\kappa$ are the pointwise variance and the so-called correlation distance, respectively, for the shifted log-conductivity in \eqref{eq:sigma_param}. The shifted electrode log-conductance parameters, appearing in both the CEM parametrization,~i.e.~$\theta$ in \eqref{eq:CEM_param}, and the smooth parametrization,~i.e.~$\rho$ in \eqref{eq:smooth_param}, are assigned a diagonal covariance of the form $\Gamma_\rho = \gamma_\rho^2 I \in \R^{M \times M}$. The $M$ contact location parameters $\xi \in [-1,1]^2$, only included in the smooth parametrization \eqref{eq:smooth_param}, are also equipped with a diagonal covariance $\Gamma_\xi = \gamma_\xi^2 I \in \R^{2M \times 2M}$. For the CEM parametrization, the total prior covariance is $\Gamma_{\rm prior} = {\rm diag}(\Gamma_\kappa, \Gamma_\rho)$, whereas that for the smooth parametrization is $\Gamma_{\rm prior} = {\rm diag}(\Gamma_\kappa, \Gamma_\rho, \Gamma_\xi)$, assuming the log-conductance parameters appear before the location parameters in $\theta \in \R^{3M}$ of \eqref{eq:smooth_param}. According to this covariance structure, the domain log-conductivity values are {\em a priori} correlated, but no other prior correlation between different parameters is assumed. Observe that it is natural to assume that the parameter vector has a vanishing mean (cf.~\eqref{eq:tikhonov}) as information on the expected values for the domain log-conductivity and the electrode log-conductances can be included as the shifts $\mu_{\kappa}$ and $\mu_{\zeta}$ in \eqref{eq:sigma_param}, \eqref{eq:CEM_param} and \eqref{eq:smooth_param}.

	To describe the noise model, the assumed structure of the physical measurements must first be explained. Let $e_1, \dots, e_M$ denote the standard Cartesian basis vectors of $\R^M$. Although the available data is presented in \eqref{eq:tikhonov} with respect to an orthogonal basis $I^{(1)}, \dots, I^{(M-1)}$ of $\R^M_\diamond$, the original noisy measurements are simulated by using  $\hat{I}^{(m)} = e_m - e_{m+1}\in \R^M_\diamond$, $m=1, \dots, M-1$, as the physical current patterns. The resulting potential vectors in Cartesian coordinates are then contaminated with additive Gaussian noise with a diagonal covariance.
	To allow a more precise explanation, we first define two auxiliary matrices  $B = [I^{(1)} \dots I^{(M-1)}] \in \R^{M\times (M-1)}$ and  $\hat{B} = [\hat{I}^{(1)} \dots \hat{I}^{(M-1)}] \in \R^{M\times (M-1)}$. Take note that both $B$ and $\hat{B}$ are invertible as mappings from $\R^{M-1}$ to $\R^M_{\diamond}$, which means that their pseudoinverses $B^\dagger$ and $\hat{B}^\dagger$ satisfy $B B^\dagger|_{\R^M_{\diamond}} = \hat{B} \hat{B}^\dagger|_{\R^M_{\diamond}} = {\rm Id}$ and $B^\dagger B = \hat{B}^\dagger \hat{B} = {\rm Id}$. Furthermore, $B^\dagger$ and $\hat{B}^\dagger$ have the span of $\mathbf{1} := [1 \dots 1]^{\rm T} \in \R^M$ as their kernels.

The noiseless physical measurements corresponding to a parameter $\upsilon \in \mathcal{I}$ are defined as
$$
	U^{(m)}(\upsilon) = B \widetilde{\Lambda}(\upsilon) B^\dagger \hat{I}^{(m)}, \qquad m=1, \dots, M-1.
$$
The corresponding noisy measurements are
\begin{equation}
	\label{eq:noisy_meas}
    V^{(m)}(\upsilon) = U^{(m)}(\upsilon) + \hat{\vartheta}^{(m)} - \frac{1}{M} \sum_{j=1}^M \hat{\vartheta}^{(m)}_j \mathbf{1}, \qquad m=1, \dots, M-1,
\end{equation}
where  $\hat{\vartheta}^{(m)} \in \R^{M}$ is a Gaussian random variable with the expected value $0 \in \R^{M}$ and a diagonal covariance $\Gamma^{(m)}$ defined elementwise via
\begin{equation}
	\label{eq:noise_model}
    \Gamma^{(m)}_{i,i} = \big( \delta_1 \max_{\substack{j=1,\dots, M\\ m=1,\dots, M}}|U^{(m)}_j(0)| \big)^2 + \big(\delta_2 |U^{(m)}_i(0)|\big)^2, \qquad \delta_1, \delta_2 > 0.
\end{equation}
In other words, the components of the noise are independent, and each of them is further a sum of two independent Gaussian random variables whose expected values are zero and the standard deviations are essentially proportional to the largest measured electrode potential and the corresponding electrode potential, respectively. The last term on the right-hand side of \eqref{eq:noisy_meas} ensures that the measured potential vector has zero mean, which simply corresponds to a normalization of the data recorded by a measurement device.

By simultaneously considering all $M-1$ equations in \eqref{eq:noisy_meas} in a matrix form, one arrives at
\begin{equation}
	\label{eq:noisy_data}
    \Upsilon(\upsilon) :=  B^\dagger \mathcal{V}(\upsilon)\hat{B}^\dagger B = \widetilde{\Lambda}(\upsilon) + B^\dagger \hat{\Theta} \hat{B}^\dagger B,
\end{equation}
where $\mathcal{V}(\upsilon) = [V^{(1)}(\upsilon) \dots  V^{(M-1)}(\upsilon)] \in \R^{M \times (M-1)}$ contains the noisy physical measurements and $\hat{\Theta} = [\hat{\vartheta}^{(1)} \dots \hat{\vartheta}^{(M-1)}] \in \R^{M \times (M-1)}$ all components of the additive noise. Because the elements of $\Theta := B^\dagger \hat{\Theta} \hat{B}^\dagger B \in \R^{(M-1) \times (M-1)}$ are linear combinations of independent Gaussian random variables whose expected values are zero, they are themselves Gaussians with vanishing expected values but with a nontrivial covariance structure that defines $\Gamma_{\rm noise}$ employed in \eqref{eq:tikhonov}.

\begin{remark}
	\label{remark:G-N}
        In the following section, we compare the performance of the novel one-step reconstruction methods with sequential linearizations. To be more precise, the latter reconstructions are computed by the iteration
    $$
    	\hat{\upsilon}_{j+1} = \hat{\upsilon}_{j} + \mathcal{M}_{\rm R}\big(\hat{\upsilon}_j)\big(\Upsilon(\upsilon) - \widetilde{\Lambda}(\hat{\upsilon}_j) \big), \qquad j=0,1,2, \quad \hat{\upsilon}_0 = 0,
    $$
where $\Upsilon(\upsilon)$ is the available noisy data and $\mathcal{M}_{\rm R}(\hat{\upsilon}_j)$ is defined in the same way as the standard $\mathcal{M}_{\rm R}$ but with $D\widetilde{\Lambda}(0;\eta)$ replaced by $D\widetilde{\Lambda}(\hat{\upsilon}_j;\eta)$ in \eqref{eq:tikhonov}. This corresponds to a Levenberg--Marquardt type algorithm as the update $\hat{\eta}_{j+1} = \hat{\upsilon}_{j+1} - \hat{\upsilon}_j$ is the minimizer of the Tikhonov functional
        $$
        \big\| {\rm vec} \big(D \widetilde{\Lambda}(\hat{\upsilon}_j; \eta) - (\Upsilon(\upsilon) - \widetilde{\Lambda}(\hat{\upsilon}_j)) \big) \big\|_{\Gamma_{\rm noise}^{-1}}^2 + \| \eta \|_{\Gamma_{\rm pr}^{-1}}^2
          $$
          with respect to $\eta$; cf.~\eqref{eq:eta1}, \eqref{eq:eta2} and \eqref{eq:eta3}.
\end{remark}

\begin{remark}
	Our choice for the physical current patterns is motivated by the observation that injecting current patterns between two electrodes is a simpler task technologically than simultaneously maintaining nonzero currents on several electrodes. Although the current is always fed between electrodes with consecutive numbers, this does not actually imply physical proximity of these electrodes. In fact, our numbering for the electrodes does not essentially have any correlation with their positions on the object boundary.
\end{remark}

\section{Numerical experiments}
\label{sec:numerics}
In this section we present our numerical experiments. The first experiment statistically compares the performance of higher order series reversion with 1--3 sequential linearizations (cf.~Remark~\ref{remark:G-N}). In the second experiment, we test if the higher orders of convergence verified in the simplistic CM based setup of \cite[Section~7]{garde2021series} can be detected in our setting with a high-dimensional unknown, modeling errors and noise. To conclude, we show some representative reconstructions for a couple of target conductivities.

We evaluate the performance of the reconstruction methods via two indicators, both of which have absolute and relative versions.  For a given target parameter vector $\upsilon$, the absolute and relative residuals
\begin{equation}
  \label{eq:res}
  {\rm Res}(\upsilon_i, \upsilon) = \|\widetilde{\Lambda}(\upsilon_i) - \Upsilon(\upsilon)\|_{\rm F},\qquad
{\rm Res}_{\rm rel}(\upsilon_i, \upsilon) = \frac{\|\widetilde{\Lambda}(\upsilon_i) - \Upsilon(\upsilon)\|_{\rm F}}{\|\widetilde{\Lambda}(0) - \Upsilon(\upsilon) \|_{\rm F}}
\end{equation}
measure the convergence of the simulated measurements corresponding to the reconstructed parameters toward the measured data. Here, $\upsilon_i$ is the reconstructed parameter vector, defined for the series reversion scheme as
$$
\upsilon_i = \sum_{k=1}^i \eta_k, \qquad i=1, \dots, 3.
$$
When considering sequential linearizations, one replaces $\upsilon_i$ by $\hat{\upsilon}_i$ in \eqref{eq:res}; see Remark~\ref{remark:G-N}. The measurement $\Upsilon(\upsilon)$ is simulated as described in \eqref{eq:noisy_data}. The reference measurement $\widetilde{\Lambda}(0)$ corresponds to the initial guess, i.e., the expected values for the parameters. Take note that \eqref{eq:res} utilizes the Frobenius norm, not the norm weighted by the inverse noise covariance as in \eqref{eq:tikhonov}.

The second pair of indicators are the absolute and relative reconstruction errors in the domain $\log$-conductivity:
\begin{equation}
  \label{eq:error}
  {\rm Err} (\upsilon_i, \upsilon) = \|\kappa_i - \kappa\|_{L^2_*(\Omega)}, \qquad
{\rm Err}_{\rm rel} (\upsilon_i, \upsilon) = \frac{\|\kappa_i - \kappa\|_{L^2_*(\Omega)}}{\|\kappa\|_{L^2(\Omega_{\rm H})}},
\end{equation}
where $\kappa_i$ and $\kappa$ are the parts of $\upsilon_i$ and $\upsilon$, respectively, defining the shifted $\log$-conductivities of the reconstruction and the target. The definitions for sequential linearizations are analogous. If the discretizations of the log-conductivity are different in the data simulation and reconstruction steps, we resort to a nearest-neighbor projection between the corresponding meshes before computing the error in the $L^2(\Omega_{\rm H})$ norm. This explains the special notation $L^2_*(\Omega)$ for the norm appearing in \eqref{eq:error}.

We do not attempt to quantitatively compare the reconstructed contact conductivities with the true ones since it is nontrivial to define an indicator for measuring the difference between two electrode conductivities that follow different parametrizations (cf.~Remark~\ref{remark:CEM-smooth} at the end of this section). Moreover, when simulating data with the smooth contact conductivity parametrization, the location parameters $\xi$ in \eqref{eq:smooth_param} are chosen to be zero. This means that the contact region is always approximately at the middle of the respective electrode when data is simulated; the flexibility to reconstruct the locations of the smooth contacts is simply considered as a tool for compensating for the discrepancy between the CEM and smooth parametrizations if the data is simulated by the former and, as always, the reconstruction is formed with the latter.

The free parameters appearing in \eqref{eq:sigma_param}, \eqref{eq:CEM_param}, \eqref{eq:smooth_param}, \eqref{eq:tikhonov} and \eqref{eq:noise_model} are specified in Table~\ref{tab:experiment1setups} for six experimental cases C1--C6. The left-hand side of the table gives the parameter values, discretization level and type of contact parametrization for simulating the data; note that the covariance-defining parameters for the contact strengths and domain log-conductivity are needed for making random draws from the prior. The right-hand side gives the parameters employed when computing the reconstructions; note that the lower discretization level, i.e.~$\Omega_{\rm L}$, and the smooth contact parametrization are always used in the reconstruction step. The value of the background log-conductivity $\mu_{\kappa} = -3.0$ in Table~\ref{tab:experiment1setups} approximately corresponds to the conductivity of Finnish tap water (cf.,~e.g.,~\cite{Hyvonen17b}) if the true physical diameter of our domain of interest were two meters, whereas the expected value of the electrode log-conductances $\mu_{\zeta} = -3.0$ corresponds to a net resistance of about $20\,\Omega$ at each electrode.

\begin{table}[tb]
  \caption{Specifications of the six experimental cases. The lower level of discretization $\Omega_{\rm L}$ and the smooth contact model are used for reconstruction. The parameter values $\gamma_\xi = 0.02$, $\mu_{\kappa} = -3.0$ and $\mu_{\zeta}=-3.0$ are common for all experiments.}
  \begin{center}
		\begin{tabular}{l|llrrr|rrr}
			& \multicolumn{5}{c|}{Measurement} & \multicolumn{3}{c}{Reconstruction} \\
			& $\Omega$ & ${\rm contact}$ & $(\delta_1, \delta_2)$ \% & $\gamma_\rho$ & $(\gamma_\kappa, \lambda_\kappa)$ & $(\delta_1, \delta_2)$ \% & $\gamma_\rho$ & $(\gamma_\kappa, \lambda_\kappa)$ \\ \hline
C1 & $\Omega_{\rm L}$ & Smooth & (0.005, 0.05) & 0.1 & (0.1, 1.0) & (0.01, 0.1) & 0.1 & (0.1, 1.0) \\
C2 & $\Omega_{\rm L}$ & Smooth & (0.01, 0.1) & 0.3 & (0.6, 0.6) & (0.05, 0.5) & 0.3 & (0.5, 0.7) \\
C3 & $\Omega_{\rm H}$ & CEM & (0.005, 0.05) & 0.1 & (0.1, 1.0) & (0.01, 0.1) & 0.07 & (0.1, 1.0) \\
C4 & $\Omega_{\rm H}$ & CEM & (0.005, 0.05) & 0.3 & (0.6, 0.6) & (0.05, 0.5) & 0.2 & (0.6, 0.6) \\
C5 & $\Omega_{\rm H}$ & CEM & (0.005, 0.05) & 0.3 & (0.6, 0.6) & (0.05, 0.5) & 0.2 & (0.5, 0.7) \\
C6 & $\Omega_{\rm H}$ & CEM & (0.01, 0.1) & 0.3 & (0.6, 0.6) & (0.05, 0.5) & 0.2 & (0.5, 0.7)
		\end{tabular}
                \end{center}
		\label{tab:experiment1setups}
\end{table}

The cases C1--C2 correspond to an inverse crime because the simulation of data and reconstruction are performed using exactly the same discretization. In the other four cases C3--C6, the measurement data is simulated using the denser discretization and the CEM model for the electrode contacts, which introduces a considerable discrepancy in modeling compared to the coarser discretization and the smooth contact model used in the reconstruction step. The noise level and the informativeness of the prior also vary between the six cases; our expectation is that the higher order methods function more stably when the prior is more informative,~i.e.,~the to-be-reconstructed parameters are expected to lie closer to the initial guess $\upsilon = 0$. In order to achieve reasonable stability for the statistical experiments, the prior used for reconstruction is often slightly more restrictive than the one used for random draws in the simulation step,  and the reconstruction methods work under an assumption of a considerably higher noise level than is actually employed in the simulation of data. That is, the regularization in \eqref{eq:tikhonov} is chosen to be stronger than the available prior information would suggest, which is expected to compensate for the numerical and modeling errors not accounted for in the additive noise model in \eqref{eq:noise_model}.

\subsection{Experiment~1}

In the first experiment, the following steps are carried out for all six cases listed in Table~\ref{tab:experiment1setups}: $1000$ samples of target parameters are drawn from the prior and the corresponding measurements are simulated according to the specifications on the left-hand side of the table. Subsequently, five different reconstructions are computed for each simulated noisy measurement according to the specifications on the right-hand side of the table. The considered reconstruction methods are the regularized first (1), second (2) and third (3) order series reversions (see~\eqref{eq:eta1}, \eqref{eq:eta2}, \eqref{eq:eta3} and \eqref{eq:tikhonov}) as well as the two- (1, \!1) and three-step (1, \!1, \!1) sequential linearizations (see~Remark~\ref{remark:G-N}).\footnote{The first order series reversion and one-step sequential linearization are in fact the same method.} In order to exclude potentially diverging cases from the statistical analysis, the 200 random draws of target parameters $\upsilon$ that resulted in the largest absolute residuals $\|\widetilde{\Lambda}(0) - \Upsilon(\upsilon)\|_{\rm F}$ are excluded before computing the mean values for the indicators ${\rm Res}_{\rm rel}$ and ${\rm Err}$ over the constructed sample of reconstructions. The resulting numbers for the six cases C1--C6 and the considered reconstruction methods are listed in Table~\ref{tab:experiment1measures}. Moreover, Figure~\ref{fig:exp1} visualizes the distributions of ${\rm Res}_{\rm rel}$ and ${\rm Err}_{\rm rel}$ over the full set of 1000 samples for the cases C1 and C5.

\begin{table}
  \caption{
    Sample means of the performance indicators ${\rm Res}_{\rm rel}$ and ${\rm Err}$ over those $800$, of a total of $1000$, random draws from the prior that resulted in the smallest absolute residuals.
    The rows correspond to the different tests specified in Table~\ref{tab:experiment1setups}. The columns correspond to different reconstruction methods: (1), (2) and (3) are the first, second and third order one-step series reversions, whereas (1, 1) and (1, 1, 1) stand for two and three steps of sequential linearizations (cf.~Remark~\ref{remark:G-N}). The trivial method of resorting to the initial guess $\upsilon_0 = 0$ is labeled (0) and $\mathbb{E}_{\upsilon}$ denotes the sample mean operation with respect to the drawn targets $\upsilon$.}
        \begin{center}
		\begin{tabular}{l|rrrrr|rrrrrr}
		  & \multicolumn{5}{c|}{$\log_{10} \!\big(\mathbb{E}_{\upsilon}({\rm Res}_{\rm rel} (\, \cdot \,, \upsilon))\big)$}
                   & \multicolumn{6}{c}{$\log_{10}\big(\mathbb{E}_{\upsilon}({\rm Err} (\, \cdot, \, , \upsilon))\big)$} \\
			& (1) & (2) & (3) & (1, \! 1) & (1, \!1, \!1) & \begin{tabular}{r} \end{tabular} (0) & (1) & (2) & (3) & (1, \!1) &  (1, \!1, \!1) \\
			\hline
C1 & -1.12 & -1.86 & -1.86 & -2.10 & -2.11 & -0.25 & -1.20 & -1.28 & -1.29 & -1.29 & -1.27 \\
C2 & -0.25 & -0.70 & -0.54 & -0.97 & -1.99 & 0.57 & 0.11 & -0.09 & -0.14 & -0.19 & -0.26 \\
C3 & -0.73 & -1.43 & -1.81 & -1.99 & -2.41 & -0.25 & -0.60 & -0.52 & -0.50 & -0.62 & -0.70 \\
C4 & -0.47 & -0.90 & -1.06 & -1.31 & -2.29 & 0.58 & 0.23 & 0.19 & 0.16 & 0.12 & 0.11 \\
C5 & -0.44 & -0.90 & -1.05 & -1.27 & -2.23 & 0.58 & 0.19 & 0.14 & 0.11 & 0.07 & 0.05 \\
C6 & -0.44 & -0.90 & -1.05 & -1.27 & -2.16 & 0.58 & 0.19 & 0.14 & 0.11 & 0.07 & 0.05 \\
		\end{tabular}
                \end{center}
		\label{tab:experiment1measures}
	\end{table}

Let us first consider the results for the easier of the two `inverse crime cases' C1, where the noise level is low and the priors for the domain log-conductivity and the electrode log-conductances are very informative. According to Table~\ref{tab:experiment1measures}, both ${\rm Res}_{\rm rel}$ and ${\rm Err}$ decrease on average when the order of the series reversion method is increased, although the decrease is no longer substantial between the second and third order methods. The two- and three-step linearization schemes result in approximately as low domain error in the log-conductivity as the second and third order series reversions, but in considerably smaller values for the relative residual. There is not much performance difference between two and three sequential linearizations, suggesting that two linearization steps are essentially enough for convergence. These conclusions are confirmed by the left-hand images of Figure~\ref{fig:exp1}. Take note that case C1 is the only one where the higher order series reversions are able to equally compete with (two) sequential linearizations in light of either of the two performance indicators considered in Table~\ref{tab:experiment1measures}.

The other `inverse crime case' C2 considers a higher noise level and considerably less informative priors for the domain log-conductivity and the electrode log-conductances. This time, the mean values of ${\rm Res}_{\rm rel}$ are much larger for the higher order series reversion methods than for two or three steps of sequential linearizations, with the third order series reversion scheme actually increasing the residual on average compared to its second order counterpart. However, the difference in the performance of the series reversion and sequential linearizations is not as pronounced when measured by the quality of the log-conductivity reconstructions.

\begin{figure}[t]
	\centering
	\subfloat{\includegraphics[width=\textwidth]{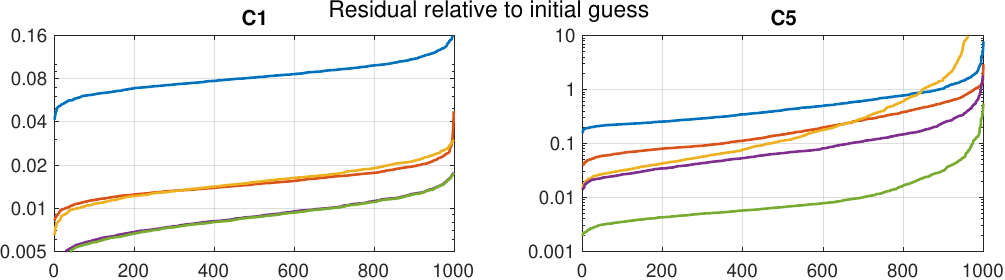}}\\
	\subfloat{\includegraphics[width=\textwidth]{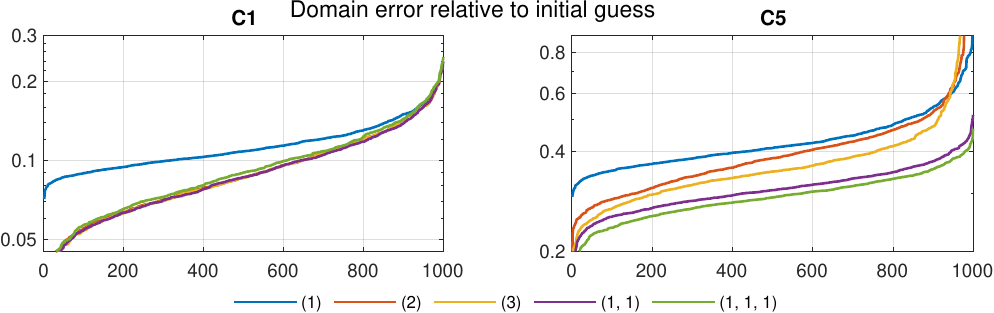}}
	\caption{Distributions of the relative residual of the forward problem ${\rm Res}_{\rm rel}$ and the relative domain conductivity error of the reconstructions ${\rm Err_{\rm rel}}$ over a sample of 1000 target parameter vectors drawn from the prior distributions in cases C1 and C5. The reconstructions were computed for regularized first (1), second (2) and third (3) order series reversions, and for two- (1, \!1) and three-step (1, \!1, \!1) sequential linearizations, with reconstruction and regularization parameters as defined for cases C1 and C5. For each method, the values of ${\rm Res}_{\rm rel}$ and ${\rm Err_{\rm rel}}$ are separately sorted in ascending order on the horizontal axis.}
	\label{fig:exp1}
\end{figure}

Case C3 alters C1 to another direction: the prior and noise model remain the same as for C1, but the denser discretization and the CEM parametrization for the contact conductivities are employed when simulating measurement data. This leads to considerable modeling errors. The behavior of ${\rm Res}_{\rm rel}$ is comparable to case C1 for both the series reversion and sequential linearizations, but the log-conductivity reconstructions are on average much worse, with the performance of the series reversion actually decreasing in this regard as a function of its order. It is worth mentioning that even the observed decrease in relative residuals for the series reversion methods is achieved only by including the contact locations $\xi$ of the smooth parametrization \eqref{eq:smooth_param} as unknowns in the inversion, that is, if $\xi = 0$ were fixed, ${\rm Res}_{\rm rel}$ would also increase on average.

Like C3, the remaining cases C4--C6 also consider settings where the data simulation is performed with the denser discretization and the CEM parametrization for the electrode contacts, while the reconstructions are computed using the sparser discretization and the smooth contact model. All three cases consider the less informative prior model for generation of data, and their differences are related to the noise level and whether a more conservative prior is assumed for the reconstruction step compared to how the random draws are performed. The conclusions for all four cases C4--C6 are essentially the same,~i.e.,~the differences in parameter values between these cases do not seem to have any significant effect on the results.  According to both indicators of Table~\ref{tab:experiment1measures}, the performance of the series reversion improves as a function of its order, although the difference between the second and third order is significantly smaller than that between the first and second order when the relative residual is considered. Moreover, two sequential linearizations clearly outperform the higher order series reversion methods with respect to both ${\rm Res}_{\rm rel}$ and ${\rm Err}$. For case C5, these conclusions are seconded by the right-hand images of~Figure~\ref{fig:exp1}.

Although in all tests the higher order series reversion methods are clearly inferior to sequential linearizations, in none of the cases considered in Table~\ref{tab:experiment1measures} do the higher order series reversion schemes completely break down. Moreover, one could argue that their relative performance compared to the sequential linearizations remains approximately the same over all six cases C1--C6, with all methods suffering roughly equally from the modeling errors in cases C3--C6. This is maybe a bit surprising since EIT is known to be sensitive to modeling errors \cite{Barber88,Breckon88,Kolehmainen97}, and it would  be intuitive to expect the higher order series reversions to further highlight this sensitivity due to their recursive nature. However, this observed robustness is probably partially due to the exclusion of the 200 `most difficult' parameter samples from the mean values listed in Table~\ref{tab:experiment1measures}. Indeed, the higher order series reversion methods have a tendency to occasionally `explode' as illustrated by the right ends of some yellow and red curves in Figure 5.1, indicating that in some cases a considerable proportion of the reconstructions fail to improve compared to the initial guess. This phenomenon can be considered to be analogous to the behavior of higher order polynomial interpolation or extrapolation for noisy data. In particular, this might suggest a decreased radius of convergence in the higher order methods compared to the first order method, but in this study we did not push this question further.

\begin{remark}
  \label{remark:CEM-smooth}
  According to our experiments, the (shifted) log-conductance parameters in \eqref{eq:CEM_param} and \eqref{eq:smooth_param} do not seem to have the same effect on the electrode measurements. That is, even though plugging in the same value for $\mu_\zeta + \theta_m$ in \eqref{eq:CEM_param} and $\mu_\zeta + \rho$ in \eqref{eq:smooth_param} gives the same net integral of the surface conductivity over the considered electrode, it seems that the actual effect of the contact on the potential at the electrode is considerably different. This phenomenon emphasizes the modeling errors in cases C3--C6 as it makes the initial guess $\upsilon = 0$ nonoptimal when data is simulated by resorting to the CEM parametrization but reconstructions are formed with the smooth parametrization. See~\cite[Section~4.3]{Hyvonen17b} for related observations.
\end{remark}

\begin{remark}
	\label{remark:asym_compl}
	Because the asymptotic computational complexity of any of our series reversion methods is the same as that of a single regularized linearization, it would be natural to also compare sequential linearizations with sequential series reversions defined in the obvious manner: one first computes a reconstruction by series reversion, say, $\upsilon_i$, then utilizes it as the basis point for the next series reversion, and so on. The reason for not considering,~e.g.,~a method labeled as $(2, 2)$ in our numerical tests is that for all considered test cases C1--C6, sequential series reversions perform statistically approximately as well as the corresponding number of sequential linearizations. That is, the performance of,~e.g.,~the method $(2, 2)$ is about as good as that of $(1, 1)$.
	In fairness, it should also mentioned that with our Python implementation, the considered FE discretizations, and the number of electrodes and unknown parameters, it is not yet obvious that the computational cost of two series reversions of different orders are of the same asymptotic computational complexity. One would probably need to consider even denser FE discretizations to observe this phenomenon. In other words, building and Cholesky-factorizing the system matrix corresponding to the sesquilinear form \eqref{eq:sesqui} plus forming and inverting the Fr\'echet derivative do not clearly dominate the other steps required by a series reversion scheme in our tests (cf.~\cite{garde2021series}).
\end{remark}

\subsection{Experiment~2}
The second experiment mimics the simplistic convergence test of \cite[Section~7.2]{garde2021series} in our three-dimensional setting with a high-dimensional unknown and modeling errors. The following computations are performed for cases C1 and C5. A single target parameter vector, say, $\hat{\upsilon}$ is drawn from the prior distribution defined on the left-hand side of the appropriate row in Table~\ref{tab:experiment1setups}, and the actual target is then formed via scaling as $\upsilon = s \hat{\upsilon}$ for some $s \in [0.2,10]$. The reconstructions by the considered five methods are computed according to the specifications of the considered case in Table~\ref{tab:experiment1setups}, but with the standard deviations of the domain log-conductivity and the contact log-conductance, $\gamma_\kappa$ and $\gamma_\rho$,  scaled by the utilized $s$.\footnote{Recall that the noise level depends on the size of the absolute data (cf.~\eqref{eq:noise_model}), so it does not go to zero as a function of $s$.} This procedure is repeated for multiple values of $s \in [0.2,10]$, and the absolute indicators ${\rm Res}$ and ${\rm Err}$ are computed for each of them. Note that the considered scaling range covers multiple orders of magnitude in terms of the domain conductivity and the contact conductances due to the employed logarithmic parametrizations.

Figure~\ref{fig:exp2} illustrates the performance indicators ${\rm Res}$ and ${\rm Err}$ as functions of the scaling parameter $s$ for both cases C1 and C5. In all subimages of Figure~\ref{fig:exp2}, the range of $s$ is restricted to a subinterval of $[0.2,10]$ to exclude uninteresting settings where the reconstruction methods essentially fail. Comparing the results to \cite[Example~7.2]{garde2021series}, we immediately observe a significant (relative) performance degradation with the higher order series reversions. The worse performance probably results from our experiment setup being significantly more complicated than in the earlier article.

\begin{figure}[t]
\centering
	\subfloat{\includegraphics[width=\textwidth]{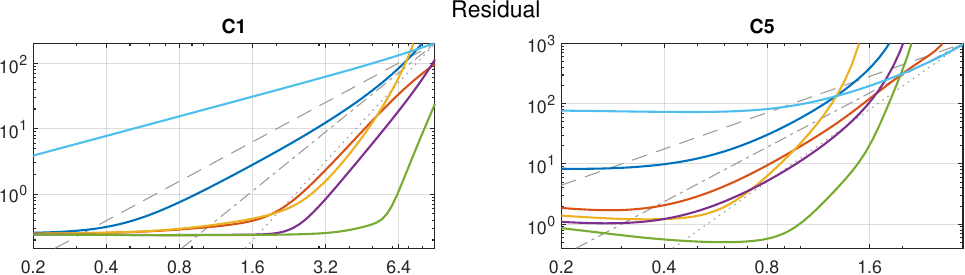}}\\
	\subfloat{\includegraphics[width=\textwidth]{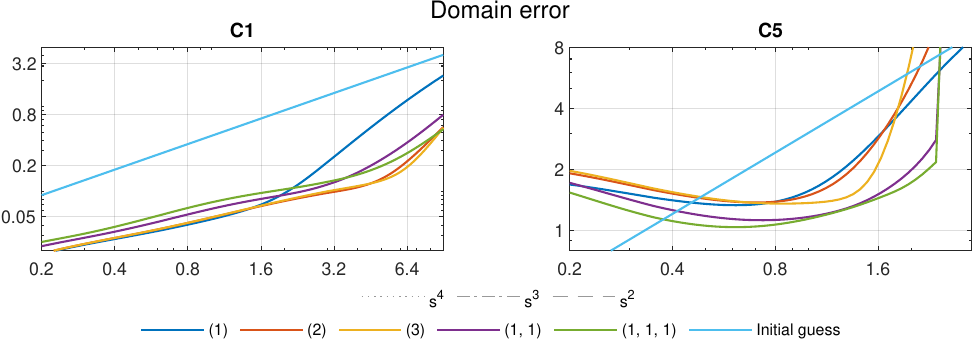}}
	\caption{Absolute residual ${\rm Res}$ (top) and domain error ${\rm Err}$ (bottom) as functions of a scaling factor $s>0$ for a single random draw from the prior in cases C1 (left) and C5 (right). Both the random draw and the corresponding (pointwise) standard deviations $\gamma_\kappa$ and $\gamma_\rho$ are scaled by the considered $s$ when computing the reconstructions, but the scaling factor does not affect the noise level that depends on the absolute measurements as indicated by \eqref{eq:noise_model}.}
	\label{fig:exp2}
\end{figure}

Let us first digest the `inverse crime case' C1. The top left image in Figure~\ref{fig:exp2} indicates that the first order series reversion, or a single regularized linearization, demonstrates approximately quadratic convergence in ${\rm Res}$ up to a scaling factor of the order $s=0.5$, below which the additive noise, which is proportional to the absolute measurement data (cf.~\eqref{eq:noise_model}), presumably starts to dominate and prohibits further convergence. The other reconstruction methods exhibit higher orders of convergence in ${\rm Res}$, but understandably they are also unable to converge beneath the noise level that is essentially independent of $s$. On the other hand, the bottom left image in Figure~\ref{fig:exp2} shows that the higher order series reversion methods and two- and three-step linearizations clearly demonstrate initial higher orders of convergence in the domain log-conductivity reconstruction error ${\rm Err}$, but all of them eventually settle with the same low order of convergence as a single regularized linearization

The right-hand images in Figure~\ref{fig:exp2} demonstrate the effect of the modeling errors induced by simulating the measurement data by the CEM parametrization for the contact conductivity but computing the reconstructions by resorting to the smooth contact model in case C5. Let us first consider the visualization of the absolute residuals ${\rm Res}$ on the top right in Figure~\ref{fig:exp2}. Due to the discrepancy between the two contact models considered in Remark~\ref{remark:CEM-smooth}, even the initial guess $\upsilon = 0$ does not produce an accurate estimate for the measured data when $s$, and thus also the target $\upsilon = s \hat{\upsilon}$, converges to zero. The higher order series reversions as well as the two- and, especially, three-step linearizations do a much better job in reducing the residual when $s$ decreases, but a single linearization is not flexible enough to overcome the modeling error between the CEM and smooth models for the contact conductivity. All considered reconstruction methods dominate the initial guess $\upsilon = 0$ when measured in the absolute domain log-conductivity error ${\rm Err}$ up to the level of about $s=0.5$, below which the modeling errors seem to prohibit further convergence. Naturally, the initial guess is not affected by such modeling errors, but it steadily converges toward the target $s \hat{\upsilon}$ as $s$ goes to zero. It is difficult to draw any reliable conclusions on the relative performance of the considered five reconstruction methods based on the right-hand images of Figure~\ref{fig:exp2}; the only thing that seems rather certain is that all of them become unreliable for scaling factors that are considerably larger than $s=1$ in case C5.

\subsection{Example reconstructions}

\begin{figure}[t!] 
	\begin{center}
		\subfloat[\label{fig:c5rec}]{\includegraphics[width=\textwidth]{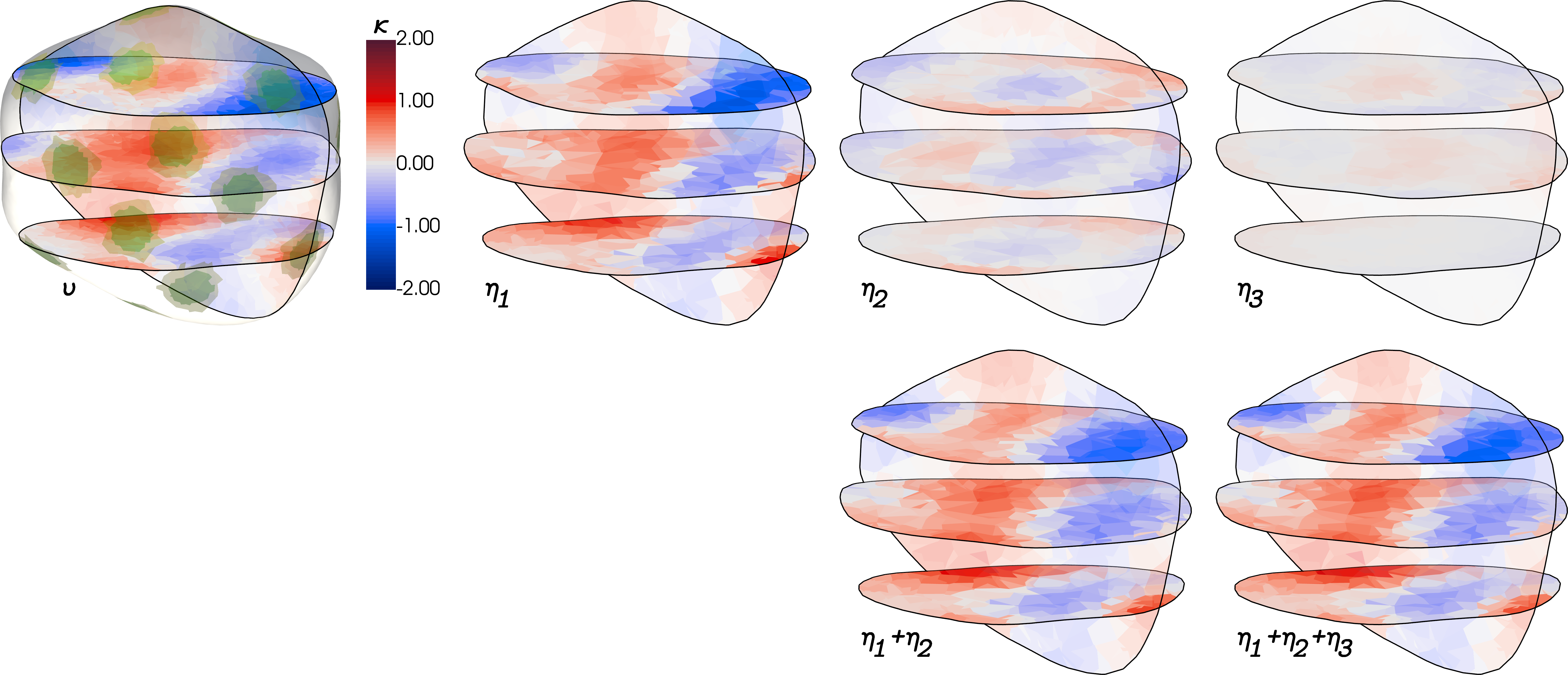}}\\
		\subfloat[\label{fig:oodrec}]{\includegraphics[width=\textwidth]{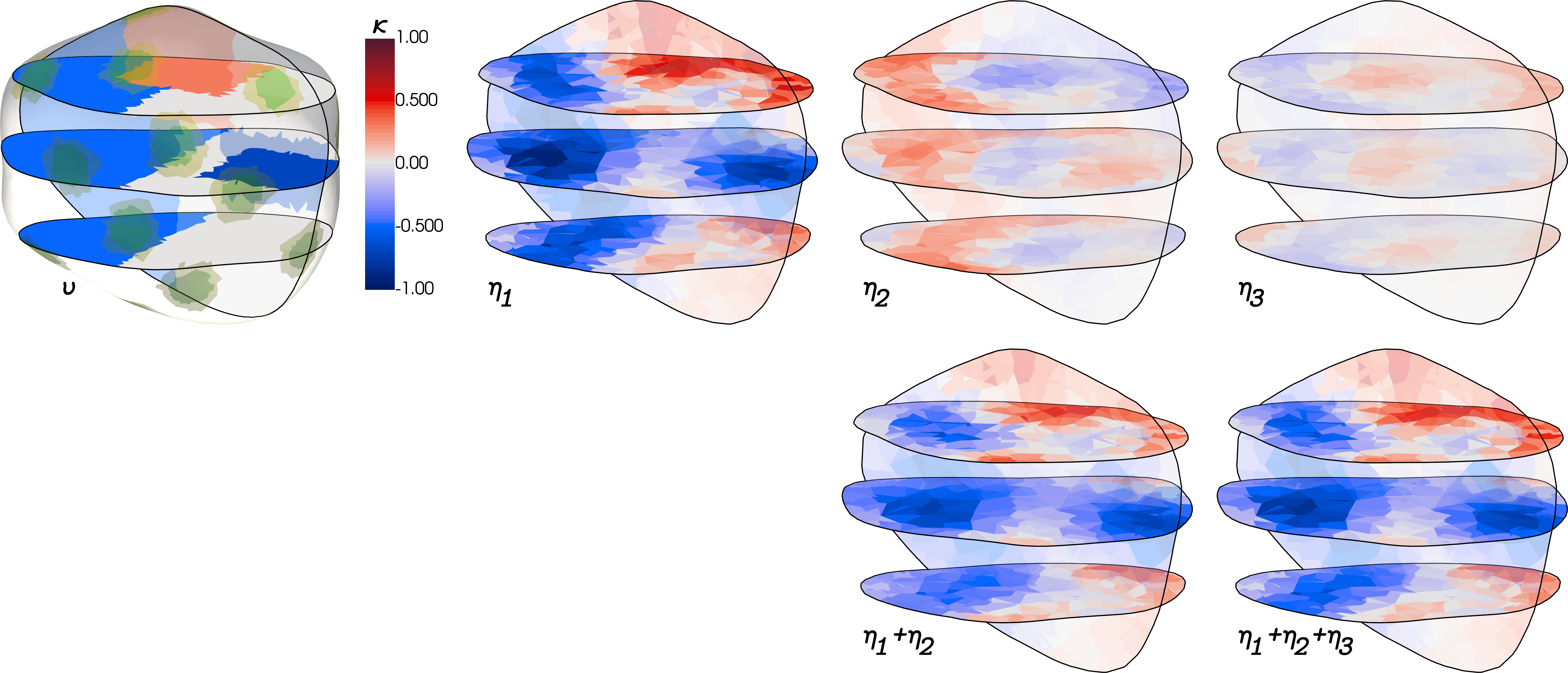}}
	\end{center}
	\caption{Two target log-conductivities and corresponding reconstructions by the series reversion methods. One vertical and three horizontal slices of the (shifted) target log-conductivities are depicted in the panels labeled by $\upsilon$. The same panels also show the forward-facing electrodes as translucent yellow surface patches and the corresponding reconstructed contact regions as green patches. The other panels visualize the first, second and third order series reversion reconstructions of the log-conductivities as well as their individual components in the order indicated by the labeling. Top: An example target log-conductivity and the corresponding reconstructions in case C5 of Table~\ref{tab:experiment1setups}. Bottom: A piecewise constant target log-conductivity with four partially overlapping, roughly rectangular, subdomains. The other parameters for the data simulation and reconstruction steps were chosen according to case C5.}
	\label{fig:recs}
\end{figure}

\begin{figure}[t!] 
	\begin{center}
		\subfloat[\label{fig:c5rec2D}]{\includegraphics[width=\textwidth]{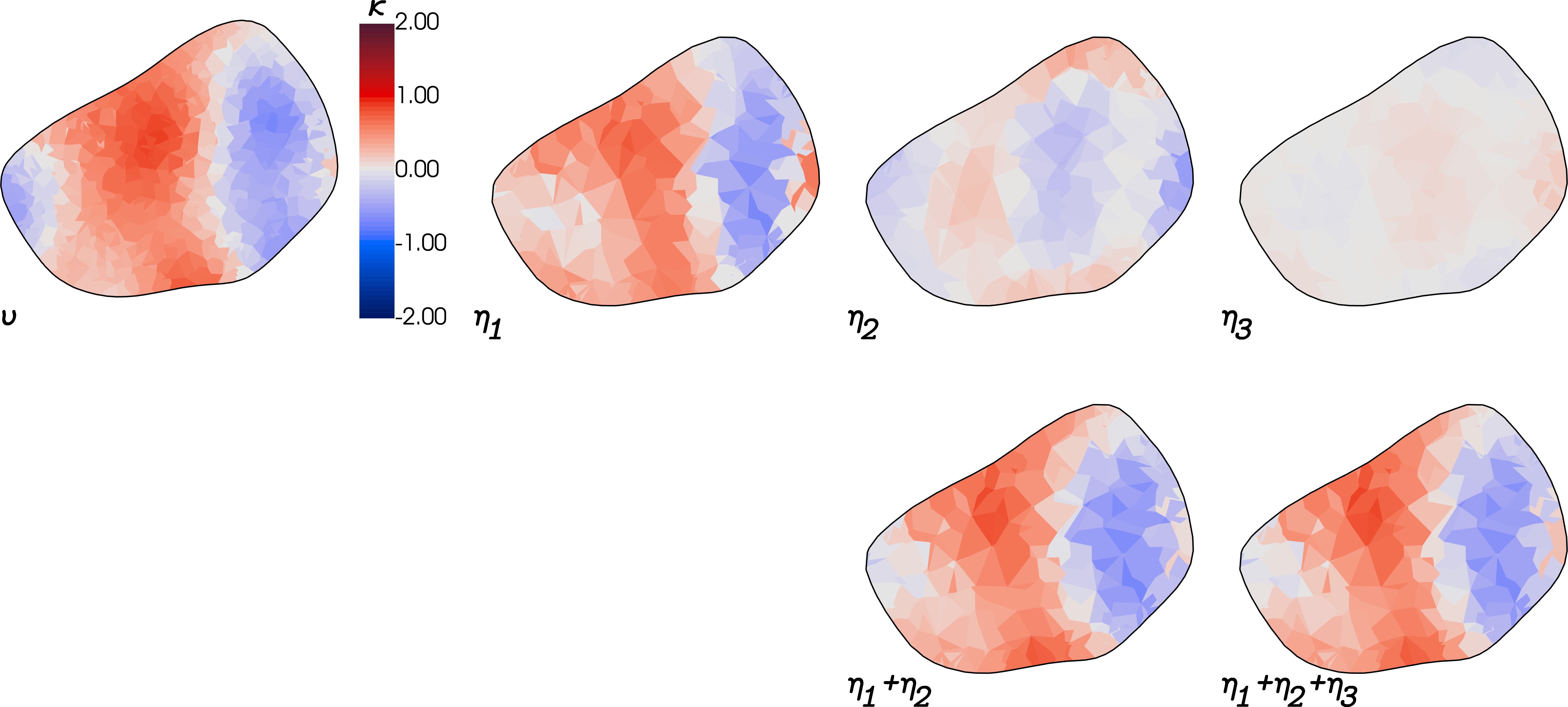}}\\
		\subfloat[\label{fig:oodrec2D}]{\includegraphics[width=\textwidth]{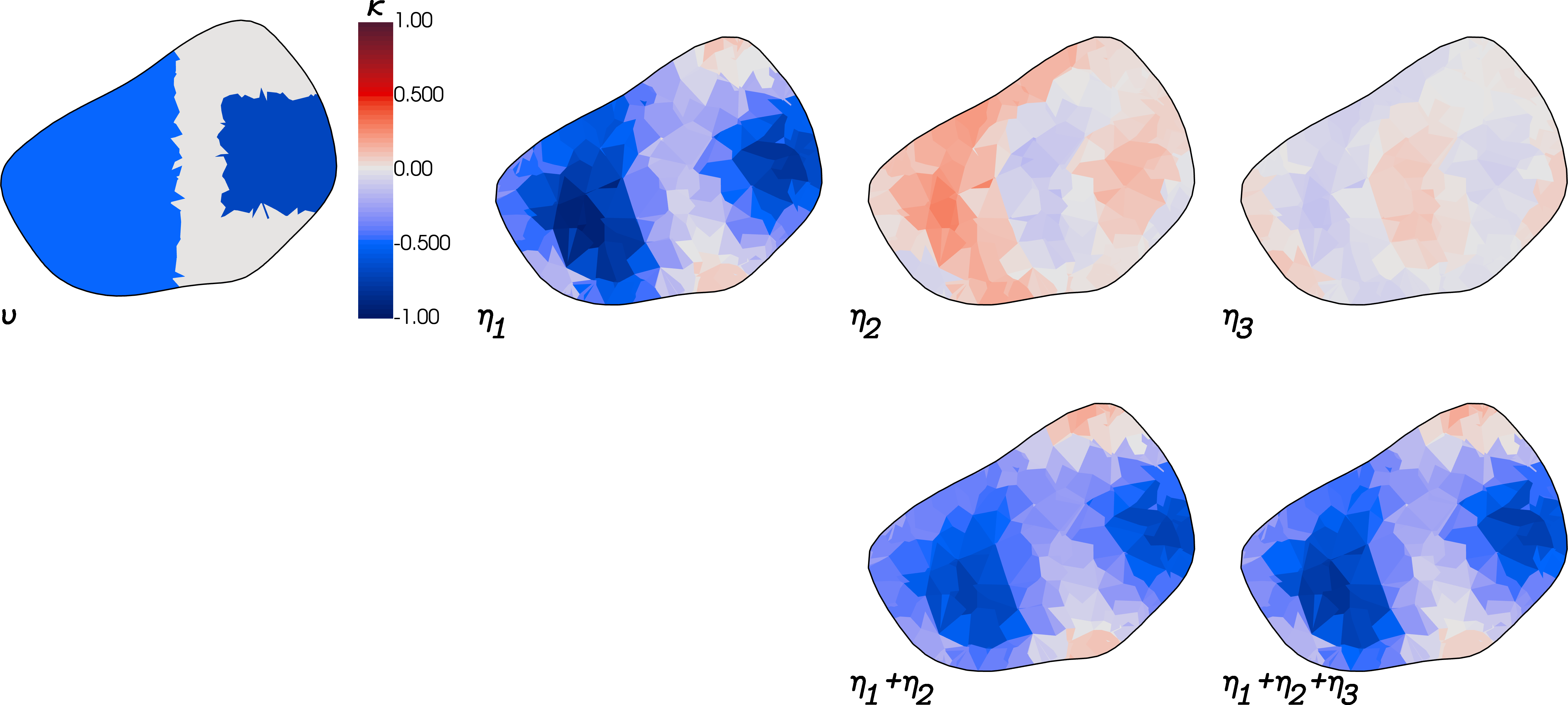}}
	\end{center}
	\caption{The central horizontal cross-sections in the twelve panels of Figure~\ref{fig:recs} presented as two-dimensional images. The ordering of the images is the same as in Figure~\ref{fig:recs}.}
	\label{fig:recs2D}
\end{figure}

\begin{table}
  \caption{The values of the performance indicators ${\rm Res}_{\rm rel}$ and ${\rm Err}$ for the series reversion reconstructions in Figure~\ref{fig:recs}. The numbering of the methods is as in Table~\ref{tab:experiment1setups}}
  \begin{center}
  \begin{tabular}{l|rrr|rrrr}
				& \multicolumn{3}{c|}{$\log_{10}\!\big({\rm Res}_{\rm rel}(\upsilon_i, \upsilon)\big)$} & \multicolumn{4}{c}{$\log_{10}\!\big({\rm Err}(\upsilon_i, \upsilon)\big)$} \\
				& (1) & (2) & (3) & \begin{tabular}{r} \end{tabular} (0) & (1) & (2) & (3) \\
				\hline
				(a) & -0.42 & -1.10 & -1.13 & 0.53 & 0.14 & 0.13 & 0.10 \\
				(b) & -0.43 & -0.83 & -1.17 & 0.42 & 0.20 & 0.20 & 0.19
  \end{tabular}
  \end{center}
  \label{table:viimeinen}
  \end{table}

Figure~\ref{fig:recs} shows two example target log-conductivities as well as associated reconstructions by the series reversion methods. The reconstructions in Subfigure~\ref{fig:c5rec} correspond to case C5, that is, the target parameter vector $\upsilon$ is drawn and the reconstructions formed according to the specifications of case C5 in Table~\ref{tab:experiment1setups}. The reconstructions in Subfigure~\ref{fig:oodrec} are also computed according to the specifications of case C5, but they correspond to a piecewise constant log-conductivity target consisting of three regions where the shifted log-conductivity in \eqref{eq:sigma_param} differs from the expected value of $\kappa = 0$. The other parameters for this piecewise constant example are chosen as in case C5.
The central horizontal cross-sections of all panels in Figure~\ref{fig:recs} are shown as two-dimensional images in Figure~\ref{fig:recs2D}.

	Based on Figures~\ref{fig:recs} and \ref{fig:recs2D}, an immediate, and perhaps disappointing, observation is that it is difficult to detect a considerable difference between the first order $\upsilon_1 = \eta_1$ and the higher order, i.e.~$\upsilon_2 = \eta_1 + \eta_2$ and $\upsilon_3 = \eta_1 + \eta_2 + \eta_3$, reconstructions. Although the values of the standard performance indicators ${\rm Res}_{\rm rel}$ and ${\rm Err}$ listed in Table~\ref{table:viimeinen} do in fact demonstrate a slight improvement for both examples and error indicators as functions of the reversion order, the differences in the quality of the domain log-conductivity reconstructions are simply too small to be visually significant.

	Comparing the first, second and third order components of the reconstructions, i.e.~$\eta_1$, $\eta_2$ and $\eta_3$, a damped oscillatory behavior is observed in their values. This is especially evident between $\eta_2$ and $\eta_3$, for which the positive and negative areas seem to be essentially swapped. According to our experience, this same oscillatory phenomenon is often encountered with most target domains, not just the ones depicted in~Figure~\ref{fig:recs}.

\section{Concluding remarks}
\label{sec:conclusions}

The goal of this paper was to introduce the series reversion methods of~\cite{garde2021series} as reconstruction schemes for three-dimensional electrode-based EIT under (considerable) modeling and numerical errors that are unavoidable in many real-life settings. It was demonstrated that the series reversion methods can indeed be robustly implemented in a practical setting, and their performance is arguably better than that of a single regularized linearization. However, a reconstruction method based on sequential linearizations, namely a Levenberg--Marquardt scheme, performed on average better than the higher order one-step series reversion methods in our statistical tests. Applying series reversions in a sequential manner resulted in approximately as good performance as the same number of sequential linearizations; see Remark~\ref{remark:asym_compl}.

According to our theoretical results as well as the numerical tests in \cite{garde2021series}, the series reversion methods demonstrate higher order convergence rates if the discretization of the unknowns is sparse enough to allow the Fr\'echet derivative of the forward map to be injective (and stably invertible). This condition was not met in our numerical studies, but the Fr\'echet derivative was inverted by resorting to certain kind of Tikhonov regularization that was motivated by Bayesian inversion. An interesting topic for future studies is to consider how regularization should be applied to the Fr\'echet derivative of the forward map in order to retain (some) good qualities of the series reversion methods in settings where the injectivity  of the Fr\'echet derivative cannot be guaranteed by considering a low enough number of unknown parameters.

\section*{Acknowledgments}
	We are grateful for Tom Gustafsson for guidance in the use of scikit-fem. The computations for statistical experiments were performed using computer resources within the Aalto University School of Science ``Science-IT'' project.

\bibliographystyle{siam}
\bibliography{series-refs}

\end{document}